\documentclass[11pt]{article}
\usepackage[labelfont=bf]{caption}

\usepackage{amsmath,amsfonts,amssymb,amsthm,booktabs,color,epsfig,graphicx,hyperref,url}
\usepackage{enumitem}

\theoremstyle{plain}
\newtheorem{theorem}{Theorem}

\newtheorem{lemma}{Lemma}
\newtheorem{corollary}{Corollary}

\usepackage{natbib}
\bibliographystyle{plainnat}
\setcitestyle{round}

\theoremstyle{definition}

\newtheorem{assumption}{Assumption}

\newcommand{\pr}{\prime}

\newcommand{\bs}{\boldsymbol{s}}

\newcommand{\convP}{\stackrel{P}{\to}}


\topmargin-1.5cm
\textwidth16.8cm
\textheight23cm
\oddsidemargin0cm%

\begin{document}

  \title{\bf Consistently recovering the signal from noisy functional data}
  \author{Siegfried H\"ormann\thanks{Corresponding author.
Email: shoermann@tugraz.at}\hspace{.2cm}\\
    Institute of Statistics, Graz University of Technology\\
    and \\
    Fatima Jammoul \\
    Institute of Statistics, Graz University of Technology}
  \maketitle

\begin{abstract}
In practice most functional data cannot be recorded on a continuum, but rather at discrete time points. It is also quite common that these measurements come with an additive error, which one would like eliminate for the statistical analysis. When the measurements for each functional datum are taken on the same grid,  the underlying signal-plus-noise model can be viewed as a factor model. The signals refer to the common components of the factor model, the noise is related to the idiosyncratic components. We formulate a framework which allows to consistently recover the signal by a PCA based factor model estimation scheme. Our theoretical results hold under rather mild conditions, in particular we don't require specific smoothness assumptions for the underlying curves and allow for a certain degree of autocorrelation in the noise.
\end{abstract}

{\it Keywords:}  factor models, functional data, PCA, preprocessing, signal-plus-noise.


\section{Introduction}

As a consequence of the explosive increase of available data in practically all areas of life, data science (as a generic term for the diverse data processing disciplines) has been steadily gaining popularity over the past decades. Handling massive data not only requests larger human and computational resources, but also demands and entails the development of new scientific approaches. Within the field of statistics, functional data analysis (FDA) is one such area which has experienced a massive surge in interest over the last couple of years. Its basic objective is to handle data, where each observation  $X_t$ can be viewed as a process  $X_t(s)$  defined on some continuous domain $\mathcal{U}$. To date a number of  comprehensive text books (e.g.,\ \cite{ramsaysilverman05, ferratyvieu:2006, horvath:kokoszka:2012, hsing:eubank:2015}) can be consulted for a general overview of the numerous tools and methods which have been developed. The most active domains of research arguably are related to functional regression and dimension reduction techniques (see e.g.,\ \cite{cuevas:2014} for a survey). The latest developments are compactly summarized in \cite{goiavieu:2016} and \cite{aneirosetal:2019}.

Our contribution concerns the preprocessing step which is needed in most functional data analyses. Typically $X_t(s)$ cannot be sampled over the entire continuum, but rather we record a discretized version.  Moreover, it is not uncommon that measuring devices become more error-prone with an increase in sampling frequency, so that often we actually observe a discretized and noisy version of $X_t(s)$. The question how to deal with the discrepancy between the theoretical model and the actual measurements is often the starting point in theoretical and practical  FDA problems. While existing literature has been focusing on different variants of smoothing techniques, we want to explore this problem now from a different perspective.

Throughout this paper we are dealing with a stationary sequence of functional data $(X_t\colon t\geq 1)$, where each data point is of the form $(X_t(s)\colon s\in[0,1])$. In the common case where $\mathcal{U}$ is an interval, the restriction to $[0,1]$ comes with no loss of generality. We assume that $X_t(s)$ is recorded at discrete time points $0\leq s_1< s_2< \cdots <s_p\leq 1$. We focus on data where these sampling points are identical for each $t$ and where the number of sampling points $p$ is large. Such a setting is very common when we have machine recorded data. Irrespective of whether we have a sparse or a dense observation grid, most papers in FDA literature assume that measurements might come with an additional error, yielding actual observations of the form
\begin{equation}\label{signoise}
y_t=(X_t(s_1),\ldots, X_t(s_p))^\top+(u_{t1},\ldots, u_{tp})^\top=:X_t(\bs)+u_t.
\end{equation}
Here and in the sequel $\bs=(s_1,\ldots, s_p)$ and $g(\bs)=(g(s_1),\ldots, g(s_p))^\top$.
When we want to process such data, we need to eliminate the noise $u_t$ in order to guarantee a valid inference for $X_t$. Most available approaches towards this end  assume $X_t(s)$ to be a smooth curve and then propose to estimate the latent signal with diverse smoothing techniques. The smoothing is mostly done curve-by-curve, for example via spline smoothing (e.g.\ \citet{ramsaysilverman05}), kernel smoothing (e.g.\ \citet{wandjones95}) or local linear regression (e.g. \citet{fan:1993}). Alternatively, \citet{Staniswalis:Lee:1998} suggest to smooth the empirical covariance of the raw data  in order to get an estimate of $\Gamma^X(s,s')=\mathrm{Cov}(X_t(s),X_t(s'))$. The eigenfunctions of the smoothed covariance are then used to get a proxy for the Karhunen-Lo\`eve expansion of $X_t$.

 In this paper we investigate an alternative method, which doesn't make use of the common smoothing techniques and which in turn doesn't rely on potentially unverifiable smoothness of the curves. Rather we use that data  $y_t$ given in \eqref{signoise} can be shown to follow some (approximate) factor model, where $X_t(\bs)$ und $u_t$ correspond to the common components and idiosyncratic components of $y_t$, respectively. This means that the components $X_t(\bs)$ are driven by some lower dimensional process (the common factors) and hence are strongly correlated, whereas the components of $u_t$ are uncorrelated or only mildly correlated and represent unsystematic fluctuations (see e.g.,\ \cite{mardia:kent:bibby:1979} for a basic introduction).
 
Recovering  $X_t(\bs)$ can thus be seen as estimating the common components in a factor model. In a companion paper \cite{hormann:jammoul:2020a} we have pointed and worked out the connection to factor models and thoroughly explored the practical performance of existing MLE or PCA based estimation schemes on real and simulated data. We found that these approaches lead to very convincing and nicely interpretable results. In a variety of considered settings the factor model estimates outperform competing smoothing techniques in terms of the resulting mean square approximation error. This advantage is particularly remarkable for moderately sized $p$ and large $T$ or when the underlying signal $X(s)$ is not smooth.
 
The primary purpose of this article is to provide the theoretical validation of the factor approach. Following \citet{fanetal2013} we will explore a principal components based estimator for the common components. In \cite{hormann:jammoul:2020a} we have experienced that it leads to slightly better practical performance when compared to a likelihood based approach (e.g.,\ \citet{baili2012}). Moreover, it is very simple to implement.
When $EX_t=0$, then the estimator is of the form 
\begin{equation}\label{e:pcaapproach}
(\hat X_1(\bs),\ldots, \hat X_T(\bs))=
Y\hat E\hat E^\top,
\end{equation}
where $Y=(y_1,\ldots, y_T)$ and where $\hat E=(\hat e_1,\ldots, \hat e_L)$ are the eigenvectors of $\frac{1}{T}Y^\top Y$ belonging to the~$L$ largest eigenvalues $\hat\gamma_1\geq \cdots\geq \hat\gamma_L$. In our theorems below we will prove that
$$
\sup_{1\leq t\leq T}\sup_{1\leq i\leq p}|\hat X_t(s_i)-X_t(s_i)|\convP 0.
$$
Here $\convP$ denotes convergence in probability. 
Our results are based on $T\to\infty$ and $p=p(T)\to\infty$. While our reasoning is similar to \cite{fanetal2013}, we are working with assumptions  specific for functional data. In particular, we are going to allow the number of factors $L$ to diverge, i.e.\ $L=L(T)\to\infty$.  Still, we will be able to work under much less restrictive moment and dependence assumptions compared to current factor model literature. They are also much less restrictive than assumptions in comparable FDA smoothing results. We will expand on this in more detail in Section~\ref{ss:comp}.

In the next section we formulate our assumptions and the main theorems. In Section~\ref{ss:proofTH} we give the core steps of the proofs. We conclude in Section \ref{s:conclusion}. Technical lemmas and detailed proofs are given in the Appendix.

\section{Results}\label{s:est}
 In this section we formulate and discuss our assumptions and present our large sample results.  It will be assumed throughout that $(X_t\colon t\geq 1)$ is stationary and that the curves  $(X_t(s)\colon s\in [0,1])$ are
 second-order stochastic processes. We then can define $\mu(s)=EX_t(s)$ and $\Gamma^X(s,s^\prime)=\mathrm{Cov}(X_t(s),X_t(s^\prime))$, respectively. We will be considering the setting where the sample size $T$ and the number of sampling points $p=p(T)$ tend to infinity.  The data are sampled as in \eqref{signoise}.
 
\subsection{Assumptions}\label{s:assumptions}
Let us now list the detailed assumptions on the processes $(u_t)$ and $(X_t)$. 

\begin{assumption}\label{a:noise}
The noise process $(u_t)$ is i.i.d.\ zero mean and independent of the signals~$(X_t)$. The processes $(u_{ti}\colon 1\leq i\leq p)$ are Gaussian with covariance function $\gamma^u(h) = \text{Cov}(u_{t(i+h)},u_{ti})$, such that
$
\sum_{h\in\mathbb{Z}}|\gamma^u(h)|\leq C_u<\infty.
$
\end{assumption}
 Assuming Gaussianity of the noise processes simplifies our proofs and assumptions and seems reasonably justified in the context of modelling measurement errors. 
It is important to note that we don't impose that $u_{ti}$ and $u_{tj}$ are uncorrelated, as it is often required for factor models. In many real data situations it is likely that errors of consecutive measurements are at least mildly correlated. Our main theorems below can be extended under suitable moment and dependence assumptions for the $(u_{ti}\colon 1\leq i\leq p)$. See \ref{appendix:gauss}.

Summability of the autocovariances implies that the correlation between the errors decays quickly with increasing lag. This is in line with the paradigm of approximate factor models. Such models have been first discussed in \cite{chamberlain1983} in context of macroeconomic applications and are now quite commonly used in high dimensional factor model settings. Several works have further investigated approximate factor models since then, see e.g.,\ \cite{bai2003, bailiao2012, choi2012, baili2016, baing:2019}.

For the signal process $(X_t)$ we will allow for temporal dependence. A concept that proved to be rather useful in this context, is the so-called $L^p$-$m$-approximability, which has been proposed in \citet{hoermann2010}.  A sequence is said to be $L^p$-$m$-approximable, if it has an ergodic representation of the form $X_t=g(\delta_t,\delta_{t-1},\ldots)$ with some measurable functional $g$ and i.i.d.\ elements $(\delta_t)$ in some measurable space and satisfies
\begin{equation}\label{e:Lpm}
\sum_{m\geq 1} (E\|X_t-g(\delta_t,\ldots,\delta_{t-m},\delta_{t-m-1}^\prime,\delta_{t-m-2}^\prime,\ldots)\|^p)^{1/p}<\infty.
\end{equation}
Here, $\|\cdot\|$ denotes the $L^2$-norm for square integrable functions on $[0,1]$ and $(\delta_{t}^\prime)$ is an independent copy of the innovations $(\delta_t)$. This means that we can approximate $X_t$ by coupled $m$-dependent sequences sufficiently well. Several popular functional time series models (e.g.\ functional ARMA or functional GARCH) fall into this framework. Unlike diverse mixing conditions,  $L^p$-$m$-approximability is often much easier to establish. For the purpose of illustration consider a functional AR(1) model  $X_t=\varrho(X_{t-1})+\delta_t$. Under suitable assumptions on the linear operator $\varrho$ (see e.g. \cite{bosq:2000}) we get by iterative application of the recursion that $X_t=\sum_{k\geq 0}\varrho^k(\delta_{t-k})$, and hence the required representation holds. A sufficient assumption is $\|\varrho\|<1$. Here the coupled version is $X_t^{(m)}=\sum_{k=0}^m\varrho^k(\delta_{t-k})+\sum_{k>m}\varrho^k(\delta_{t-k}^\prime)$ and thus if the $\delta_t$ have $p$ moments we get
$$
\sum_{m\geq 1} (E\|X_t-X_t^{(m)}\|^p)^{1/p}\leq (E\|\delta_0-\delta_0^\prime\|^p)^{1/p}\sum_{m\geq 1}\sum_{k>m}\|\varrho\|^k<\infty.
$$
This shows that with increasing $m$, the  $L^p$ error between the original data and the coupled $m$-dependent sequence goes to zero fast enough to guarantee summability in \eqref{e:Lpm}.
More details can be found in \cite{hoermann2010}.
\begin{assumption}\label{a:signal}
(i) The process $(X_t\colon t\geq 1)$ is zero mean and $L^4$-$m$-approximable. (ii) The curves $X_t=(X_t(s)\colon s\in [0,1])$ define fourth order random processes (i.e.\ $
\sup_{s\in[0,1]} EX_1^4(s)\leq C_X<\infty$)
with a continuous covariance kernel. (iii) It holds that $E\sup_{s\in [0,1]} X^2_1(s)<\infty$. (iv)~Observations $X_t$ lie in some $L$-dimensional function space, where $L=L(T)$ may diverge with $T\to\infty$.
\end{assumption}
The zero mean assumption is not necessary, but it simplifies the presentation. 
In practice $\mu(\bs)$ will be estimated via $\hat\mu(\bs)=T^{-1}(y_1+\cdots+y_T)$ and data will be centered. 
It is implicit in the definition of $L^4$-$m$-approximability that  the process $(X_t)$ is strictly stationary and ergodic, which is why the conditions (ii) and (iii) can be restricted to $X_1$. 
 The main theoretical restriction is Assumption~\ref{a:signal}~(iv). It is motivated by Mercer's theorem which yields under the assumption of a continuous covariance kernel that for a functional variable $X_t$ it holds that
 \begin{equation}\label{e:mercer}
\sup_{s\in [0,1]}E\left|X_t(s)-\mu(s)-\sum_{\ell= 1}^L x_{t\ell}\varphi_\ell(s)\right|^2\to 0,\quad L\to\infty,
\end{equation}
where  $\varphi_\ell$ denotes the eigenfunction of the covariance operator $\Gamma^X$ belonging to the $\ell$-th eigenvalue $\lambda_\ell$ and the $x_{t\ell}=\int_0^1(X_t(s)-\mu(s))\varphi_\ell(s) ds=:\langle X_t-\mu,\varphi_\ell\rangle$ are functional principal component scores. In most real data problems the convergence in \eqref{e:mercer} is very fast, so that even for relatively small $L$ the variables $X_t^L(s)=\mu(s)+\sum_{\ell =1}^L x_{t\ell}\varphi_\ell(s)$ define only a slight perturbations of $X_t(s)$, lying in the $L$-dimensional space spanned by the functional principal components $\varphi_1,\ldots,\varphi_L$. Hence this Assumption~\ref{a:signal}~(iv) holds asymptotically for a very broad class of functional data. 

Now the key observation that we are going to exploit in the context of this paper is that  under Assumptions~\ref{a:noise} and~\ref{a:signal} the $y_t$ given in \eqref{signoise} follow some $L$-factor model, where $X_t(\bs)$ is the common component and $u_t$ the idiosyncratic component. See \cite{hormann:jammoul:2020a}. 

\begin{assumption}\label{a:pcs}
For the eigenfunctions $\varphi_\ell$  it holds that $\max_{1\leq k,\ell\leq L}\left|p^{-1}\sum_{i=1}^p\varphi_k(s_i)\varphi_\ell(s_i)\right|=O(1)$ as $T\to\infty$.
\end{assumption}
At this stage we stress once more that $p=p(T)$ and $L=L(T)$, which explains the asymptotic statement for $T\to\infty$. 
If $s_i$ are dense in $[0,1]$ then $p^{-1}\sum_{i=1}^p\varphi_k(s_i)\varphi_\ell(s_i)\approx \int_0^1\varphi_k(s)\varphi_\ell(s)ds=\delta_{k,\ell}$ (Kronecker delta). Hence, this assumption is very mild.

\subsection{Theorems}\label{s:theorems}
We are now ready to formulate our asymptotic results. We distinguish the scenarios  where $L$ is fixed (Theorem~\ref{thm}) and where $L=L(T)\to\infty$ (Theorem~\ref{thm2}). Our main results are formulated with $L$ known. At the end of this section we comment on the estimation of~$L$. All following asymptotic formulations are understood as $T\to\infty$.

\begin{theorem}\label{thm}
Let Assumptions~\ref{a:noise}--\ref{a:pcs} hold. Assume that $p=p(T)$ diverges at a subexponential rate and let $L$ be arbitrary but fixed. Furthermore, suppose that $p/\hat\gamma_L=O_P(1)$. Then for any $t\leq T$
$$\max_{1 \leq j \leq p } | \hat{X}_t(s_j) - X_t(s_j)|= O_P\Big(\frac{1}{T^{1/4}}+\frac{T^{1/4}}{\sqrt{p}}\Big).$$
Hence the estimator is consistent if $\sqrt{T}/p\to 0$. Moreover, 
$$\max_{1 \leq j \leq p } \max_{1\leq t\leq T}| \hat{X}_t(s_j) - X_t(s_j)|= O_P\left(\max_{1\leq t\leq T}\|f_t\|\left(\frac{1}{T^{1/4}}+\frac{T^{1/4}}{\sqrt{p}}\right)\right),$$ where $f_t=(x_{t1}/\sqrt{\lambda_1},\ldots, x_{tL}/\sqrt{\lambda_L})^\top$.
\end{theorem}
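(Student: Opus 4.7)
The plan is to recast \eqref{signoise} as an approximate $L$-factor model and analyse the resulting PCA estimator in the spirit of \citet{bai2003} and \citet{fanetal2013}, adapting the arguments to the weaker moment and dependence conditions collected in Assumptions~\ref{a:noise}--\ref{a:pcs}. By Assumption~\ref{a:signal}(iv) and Mercer's theorem, $X_t(\bs)=\Lambda f_t$ with $\Lambda$ the $p\times L$ matrix whose $\ell$-th column is $\sqrt{\lambda_\ell}\,\varphi_\ell(\bs)$, so that $y_t=\Lambda f_t+u_t$, $Ef_tf_t^\top=I_L$, and $\|\Lambda^\top\Lambda/p\|=O(1)$ by Assumption~\ref{a:pcs}. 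The hypothesis $p/\hat\gamma_L=O_P(1)$ forces the $L$ leading eigenvalues of $T^{-1}Y^\top Y$ to be of order $p$, which identifies the estimated factor space up to an $L\times L$ rotation.

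First, starting from the eigenvalue equation $T^{-1}Y^\top Y\hat F=\hat F V$ (with $\hat F=\sqrt T\hat E$ and $V$ diagonal containing the leading eigenvalues) and substituting $Y=\Lambda F^\top+U$, one obtains the Bai-type identity
\[
\hat f_t-Hf_t = V^{-1}T^{-1}\sum_{s=1}^T \hat f_s\bigl\{f_s^\top\Lambda^\top u_t + u_s^\top\Lambda f_t + u_s^\top u_t - E[u_s^\top u_t]\bigr\} + V^{-1}T^{-1}\sum_{s=1}^T \hat f_s\,E[u_s^\top u_t]
\]
for a suitable random $L\times L$ rotation $H=H(T)$. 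The four summands are bounded using the assumptions: Gaussianity of $(u_t)$ together with Assumption~\ref{a:noise} yields $E\|\Lambda^\top u_t\|^2=O(p)$, $\max_{t\leq T}\|u_t\|^2=O_P(p)$, and sub-exponential concentration for the quadratic forms $u_s^\top u_t$, while the $L^4$-$m$-approximability of $(X_t)$ provides $T^{-1}\sum_s\|f_s\|^4=O_P(1)$ and controls the cross sums $T^{-1}\sum_s f_{s\ell}u_{sj}$ uniformly in $j\leq p$ through a union bound whose $\log p$ factor is absorbed by the subexponential growth of $p$. Together with $V\asymp p$, this yields the average bound $T^{-1}\sum_t\|\hat f_t-Hf_t\|^2=O_P(T^{-1/2}+T^{1/2}/p)$, whose square root is precisely the rate stated in the theorem.

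Next, I would treat the estimated loadings $\hat\Lambda:=Y\hat F/T$ via the dual identity, establishing $\|\hat\Lambda_j-(H^{-1})^\top\Lambda_j\|=O_P(T^{-1/4}+T^{1/4}/\sqrt p)$ uniformly in $j\leq p$ (Gaussianity of $u$ is what makes the uniformity affordable). The estimation error at $(t,s_j)$ then admits the algebraic decomposition
\[
\hat X_t(s_j)-X_t(s_j) = \Lambda_j^\top H^{-1}(\hat f_t-Hf_t) + \bigl(\hat\Lambda_j-(H^{-1})^\top\Lambda_j\bigr)^\top Hf_t + \bigl(\hat\Lambda_j-(H^{-1})^\top\Lambda_j\bigr)^\top(\hat f_t-Hf_t).
\]
For fixed $t$, using $\|\Lambda_j\|=O(1)$ uniformly in $j$ (which follows from the continuity of $\Gamma^X$) and $E\|f_t\|^2=L$, each piece is bounded at the announced rate, giving the first assertion. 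For the second assertion, taking the maximum also over $t\leq T$ replaces $\|f_t\|$ by $\max_{t\leq T}\|f_t\|$ in the pieces depending on $f_t$, and the remaining term is handled through a pointwise-in-$t$ version of the Bai-type identity that yields $\max_{t\leq T}\|\hat f_t-Hf_t\|\lesssim \max_{t\leq T}\|f_t\|\cdot (T^{-1/4}+T^{1/4}/\sqrt p)$.

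The main obstacle is extracting the rate $T^{-1/4}+T^{1/4}/\sqrt p$ (rather than the faster rates typical in the factor-model literature) under the weak conditions adopted here: only fourth-order moments and $L^4$-$m$-approximability are available for $(X_t)$, so the key maximal inequalities --- especially for $\max_{j\leq p}|T^{-1}\sum_t f_{t\ell}u_{tj}|$ and for $\max_{t\leq T}\|\Lambda^\top u_t\|$ --- must be derived without sub-Gaussianity of the factors, and the uniform-in-$j$ step relies critically on the Gaussianity of the noise coupled with the subexponential growth of $p$.
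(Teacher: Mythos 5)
Your proposal is correct and follows essentially the same route as the paper: the same Bai/Fan-type identity for $\hat f_t - Hf_t$ (the paper's terms $A_{1t},A_{2t},A_{3t}$ in \eqref{e:at}), the same uniform-in-$j$ treatment of the loadings via Gaussianity of the noise and a union bound over $p$, and the same final trilinear decomposition of $\hat X_t(s_j)-X_t(s_j)$ into factor-error, loading-error and cross terms, with the rate $T^{-1/4}+T^{1/4}/\sqrt p$ emerging from the max over $t$ of the quadratic forms $u_\ell^\top u_t$ exactly as in Lemma~\ref{l:ak11}. The only cosmetic difference is that you center the loadings at $(H^{-1})^\top\Lambda_j$ rather than $Hb_j$, which trades the paper's explicit term $b_j^\top(H^\top H-I_L)f_t$ (Lemma~\ref{l:H}) for the invertibility and boundedness of $H^{-1}$ --- facts the paper establishes anyway inside the proof of that lemma.
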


Theorem~\ref{thm} needs further explanation. First, we comment on $p/\hat\gamma_L=O_P(1)$. The following lemma provides a simple condition when this assumption holds.
\begin{lemma}\label{l:lambdagamma} Assume that the sampling points $s_i$ are equidistant and that
\begin{equation}\label{e:inc}
\sup_{s\in [0,1]}E|X_t(s+h)-X_t(s)|^2=O(h),\quad h\to 0.
\end{equation}
Then, under the conditions of Theorem~\ref{thm}, we have $\max_{i\geq 1}|\lambda_i-\hat\gamma_i/p|=O_P\left(1/\sqrt{p}+1/\sqrt{T}\right)$.
\end{lemma}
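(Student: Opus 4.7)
The plan is to compare $\hat\gamma_i/p$ with $\lambda_i$ through a chain of Weyl-type perturbation bounds, passing through two intermediate objects: the $p\times p$ matrix $K^X_p/p$ with entries $\Gamma^X(s_i,s_j)/p$, and an integral operator $A_p$ on $L^2([0,1])$ whose kernel is a piecewise-constant approximation of $\Gamma^X$. Since the non-zero eigenvalues of $Y^\top Y/T$ and $YY^\top/T$ coincide, I would work with the $p\times p$ matrix $YY^\top/(pT)$, whose eigenvalues are $\hat\gamma_i/p$. Writing $Y=\tilde Y+U$ with signal matrix $\tilde Y=(X_1(\bs),\ldots,X_T(\bs))$ and noise matrix $U=(u_1,\ldots,u_T)$, I split the comparison into three steps, bounding spectral norms by Frobenius norms wherever convenient.

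\emph{Step 1 (noise).} For the cross term $\tilde YU^\top/(pT)$, independence of $U$ and $(X_t)$ together with Assumptions~\ref{a:noise} and~\ref{a:signal}(ii) give entry-wise variance $O(1/(p^2T))$, so its Frobenius norm is $O_P(1/\sqrt T)$. For $UU^\top/(pT)$, decompose as $\Sigma^u/p+(UU^\top/T-\Sigma^u)/p$: the first term has spectral norm at most $C_u/p$ by Gershgorin on the Toeplitz matrix $\Sigma^u$, and the fluctuation has Frobenius norm $O_P(p/\sqrt T)$ by a Gaussian fourth-moment calculation, giving $O_P(1/\sqrt T)$ after division by $p$. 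Hence $\|YY^\top/(pT)-\tilde Y\tilde Y^\top/(pT)\|_{\mathrm{op}}=O_P(1/\sqrt T+1/p)$.

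\emph{Step 2 (ergodic average).} Each entry of $\tilde Y\tilde Y^\top/T-K^X_p$ is a centered ergodic average of $X_t(s_i)X_t(s_j)$ whose variance is $O(1/T)$ under $L^4$-$m$-approximability. Summing over the $p^2$ entries in Frobenius norm and dividing by $p$ yields $\|\tilde Y\tilde Y^\top/(pT)-K^X_p/p\|_{\mathrm{op}}=O_P(1/\sqrt T)$.

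\emph{Step 3 (discretization; the crux).} Let $I_i$ be the $i$-th interval of the equidistant partition of $[0,1]$ into $p$ pieces, chosen so that $s_i\in I_i$, and define the integral operator $A_p$ on $L^2([0,1])$ with kernel $\bar\Gamma^X_p(s,s')=\Gamma^X(s_i,s_j)$ for $(s,s')\in I_i\times I_j$. A direct computation shows that the matrix of $A_p$ in the orthonormal basis $\{\sqrt p\,\mathbf 1_{I_i}\}$ is exactly $K^X_p/p$, so their non-zero eigenvalues agree. By Cauchy--Schwarz and \eqref{e:inc}, $|\Gamma^X(s+h,s')-\Gamma^X(s,s')|\leq (E(X_1(s+h)-X_1(s))^2\cdot EX_1(s')^2)^{1/2}=O(\sqrt h)$ uniformly in $s,s'$, so $\sup_{s,s'}|\Gamma^X(s,s')-\bar\Gamma^X_p(s,s')|=O(1/\sqrt p)$ and hence $\|\Gamma^X-A_p\|_{\mathrm{HS}}=O(1/\sqrt p)$. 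Weyl--Lidskii then gives $\max_{i\geq 1}|\mu_i(A_p)-\lambda_i|=O(1/\sqrt p)$. Combining Steps~1--3 via Weyl's inequality proves the claim; for $i>L$, Assumption~\ref{a:signal}(iv) forces $\lambda_i=\mu_i(K^X_p/p)=0$ and the same noise/ergodic bound controls $\hat\gamma_i/p$. The main obstacle I anticipate is Step~3: cleanly identifying $K^X_p/p$ with the concrete operator $A_p$ so that the weak continuity hypothesis \eqref{e:inc} yields a sharp $O(1/\sqrt p)$ discretization rate.
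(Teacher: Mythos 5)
Your architecture is a legitimate alternative to the paper's. The paper runs the chain in the opposite direction and at the operator level: it compares $\lambda_i$ to the eigenvalues $\hat\lambda_i$ of the empirical covariance operator of the \emph{full curves} (an $O_P(1/\sqrt T)$ step via the Hilbert--Schmidt bound of Lemma~\ref{l:XtimesX}(ii)), then to the eigenvalues $\hat\lambda_i^*$ of the empirical operator of the \emph{discretised curves} (an $O_P(1/\sqrt p)$ step using \eqref{e:inc} pathwise), identifies $\hat\lambda_i^*=\hat\gamma_i^X/p$ by the same piecewise-constant-basis computation you use for $A_p$, and finally handles the noise via random matrix results of Johnstone and El Karoui. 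Your Steps~1 and~3 are sound and in some respects cleaner: the Frobenius/fourth-moment treatment of $UU^\top$ and of the cross term is elementary and avoids the random matrix theory entirely, and your identification of $K^X_p/p$ with the integral operator $A_p$ together with the $O(\sqrt h)$ modulus of the kernel deduced from \eqref{e:inc} correctly delivers the deterministic $O(1/\sqrt p)$ discretisation error. The treatment of indices $i>L$ via the rank of $K^X_p$ is also fine.

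The genuine gap is Step~2. You claim that each entry $\frac1T\sum_t X_t(s_i)X_t(s_j)-\Gamma^X(s_i,s_j)$ has variance $O(1/T)$ ``under $L^4$-$m$-approximability.'' Assumption~\ref{a:signal}(i) imposes $L^4$-$m$-approximability with respect to the \emph{integrated} norm $\|\cdot\|$ on $L^2([0,1])$; this yields summability of autocovariances of $X_t\otimes X_t$ in the Hilbert--Schmidt sense (which is what Lemma~\ref{l:XtimesX}(ii) uses), but it does not yield $\sum_h|\mathrm{Cov}(X_0(s_i)X_0(s_j),X_h(s_i)X_h(s_j))|\leq C$ uniformly over the grid, which is what your entrywise $O(1/T)$ variance bound requires. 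Without such pointwise temporal decorrelation, stationarity alone only gives each entry variance $O(1)$, hence a useless $O_P(1)$ bound after summing over $p^2$ entries and dividing by $p$. The fix is either to strengthen the dependence assumption to a pointwise $m$-approximability condition $\sum_m\sup_s(E|X_0(s)-X_0^{(m)}(s)|^4)^{1/4}<\infty$, or to reroute exactly as the paper does: note that $\tilde Y\tilde Y^\top/(pT)$ is (by your own basis computation) the matrix of the empirical operator of the discretised curves, compare it to the empirical operator of the full curves using \eqref{e:inc} applied to the sample paths, and then invoke the integrated Hilbert--Schmidt bound $E\|\hat\Gamma^X-\Gamma^X\|_{\mathcal S}^2=O(1/T)$. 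With that repair your proof goes through and recovers the stated rate $O_P(1/\sqrt p+1/\sqrt T)$.
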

The lemma shows, among others, that $p/\hat\gamma_L$ is a consistent estimator for $\lambda_L^{-1}$ and hence is $O_P(1)$. Condition \eqref{e:inc} holds if the covariance kernel $\Gamma^X$ is uniformly continuous. We would like to point out that the conditions in Lemma~\ref{l:lambdagamma} may be adapted and weakened. We decided to disentangle this problem from the theorem and to present conditions which are neat and not very technical.  

Concerning the uniform consistency in Theorem~\ref{thm} we need in addition a bound for $\max_{1\leq t\leq T}\|f_t\|$. Such bounds can easily be obtained under higher order moment conditions. We formulate the following lemma.
\begin{lemma}\label{l:fs}
Suppose that Assumption~\ref{a:signal} holds and that $L$ is fixed.\\[1ex]
\emph{(i)} If $E\|X_1\|^{q}<\infty$ for some $q>0$, then
$\max_{1\leq t\leq T}\|f_t\|=o_P\left(T^{1/q^\pr}\right)$ for all $0<q^\pr<q$.\\
\emph{(ii)} If  $E\exp(\alpha \|X_1\|)<\infty$ for some $\alpha>0$, then 
$\max_{1\leq t\leq T}\|f_t\|= O_P \left(\log(T)\right).$
\end{lemma}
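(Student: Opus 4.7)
The key observation is that, since $L$ is fixed, the coordinates of $f_t$ are renormalised principal component scores and can be uniformly dominated by $\|X_t\|$. More precisely, by Assumption~\ref{a:signal}(i) the process is zero mean, so $x_{t\ell}=\langle X_t,\varphi_\ell\rangle$ and Bessel's inequality gives
$$
\|f_t\|^2=\sum_{\ell=1}^L\frac{x_{t\ell}^2}{\lambda_\ell}\le\frac{1}{\lambda_L}\sum_{\ell=1}^L x_{t\ell}^2\le\frac{\|X_t\|^2}{\lambda_L}.
$$
Since $L$ is fixed, $\lambda_L>0$ is a deterministic constant, so it suffices to bound $\max_{1\le t\le T}\|X_t\|$ in both regimes.

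For part (i), I would apply a union bound together with stationarity and Markov's inequality: for any $\varepsilon>0$,
$$
P\Big(\max_{1\le t\le T}\|X_t\|>\varepsilon T^{1/q^\pr}\Big)\le T\,P(\|X_1\|>\varepsilon T^{1/q^\pr})\le\frac{E\|X_1\|^{q}}{\varepsilon^{q}\,T^{q/q^\pr-1}},
$$
which tends to zero because $q/q^\pr>1$. This gives $\max_{1\le t\le T}\|X_t\|=o_P(T^{1/q^\pr})$ and hence the claim. Note that ergodicity/dependence is irrelevant here: the union bound only uses marginals, which are identical by stationarity.

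For part (ii), the same union bound combined with the exponential Markov inequality gives, for any $C>0$,
$$
P\Big(\max_{1\le t\le T}\|X_t\|>C\log T\Big)\le T\,E\exp(\alpha\|X_1\|)\,T^{-\alpha C}=E\exp(\alpha\|X_1\|)\,T^{1-\alpha C}.
$$
Choosing $C>1/\alpha$ makes the right hand side vanish, yielding $\max_{1\le t\le T}\|X_t\|=O_P(\log T)$ and hence $\max_{1\le t\le T}\|f_t\|=O_P(\log T)$.

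There is no real obstacle in this argument; the only thing to be careful about is the reduction step, where one should record that the bound $\|f_t\|\le\|X_t\|/\sqrt{\lambda_L}$ genuinely requires $L$ to be fixed (otherwise $\lambda_L\to 0$ and the inequality is useless), which is why this lemma is stated separately from the $L\to\infty$ regime treated in Theorem~\ref{thm2}.
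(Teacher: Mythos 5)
Your proposal is correct and follows essentially the same route as the paper: a union bound over $t$ combined with Markov's inequality (polynomial in part (i), exponential in part (ii)), after reducing $\|f_t\|$ to a fixed multiple of $\|X_t\|$. The only cosmetic difference is that you invoke Bessel's inequality to get $\|f_t\|^2\le \|X_t\|^2/\lambda_L$, whereas the paper bounds each score termwise to get the slightly cruder $E\|f_t\|^q\le E\|X_1\|^q(L/\lambda_L)^{q/2}$; both are equivalent for fixed $L$.
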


The lemma implies that if we request moments of order $>4$ for the signal process $X_t$, then our Theorem~\ref{thm} provides uniform consistency over time and cross-section.

Next we consider the setting where the dimension of the function space is allowed to diverge. Not surprisingly, this requires extra conditions, e.g.\ on the eigenvalues of $\Gamma^X$ and on the growth rate of $L$. Instead of listing several different sets of conditions and results, we have decided to present here one specific setup which allows for relatively neat conditions. However, we transparently outline the main steps and bounds needed for the proofs in Section~\ref{ss:proofTH}. From there it is not very complicated to see where the respective conditions enter and how changing the assumptions will change the results. Here we refer to the decomposition of the approximation error in \eqref{e:bt} and \eqref{e:buni} in particular. This leaves some flexibility and allows the analyst to adapt the theorems to the setup under investigation.

\begin{theorem}\label{thm2}
Let Assumptions~\ref{a:noise}--\ref{a:pcs} hold. Assume that $p=p(T)$ diverges at a subexponetial rate and that $L=L(T)$ diverges at a subpolynomial rate.  Furthermore, assume that for some $\nu>0$ and some $\rho>0$ we have $\lambda_j\geq \rho j^{-\nu}$ and that there is some $\beta>0$ such that $p/\hat\gamma_L=O_P(L^{\beta})$. Then, for any $1\leq t\leq T$ it holds that
$$\max_{1 \leq j \leq p } | \hat{X}_t(s_j) - X_t(s_j)|= O_P\left(L^{2\beta+5/2+\nu/2}\bigg(\frac{1}{T^{1/4}}+\frac{T^{1/4}}{\sqrt{p}}\bigg)\right).$$
Moreover, if $E\|X_1\|^{q}<\infty$ for some $q>4$, then for all $q^\pr <q$ we have
$$\max_{1 \leq t \leq T }\max_{1 \leq j \leq p } | \hat{X}_t(s_j) - X_t(s_j)|=o_P\left(L^{2\beta+5/2+\nu}T^{1/q^\pr}\bigg(\frac{1}{T^{1/4}}+\frac{T^{1/4}}{\sqrt{p}}\bigg)\right).$$
\end{theorem}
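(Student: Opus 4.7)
The plan is to carry out the same style of argument as in Theorem~\ref{thm}, carefully tracking how each piece of the error bound scales with $L$. Write the model as an approximate factor model $y_t=\Lambda f_t+u_t$ with $\Lambda$ the $p\times L$ matrix whose columns are $\sqrt{\lambda_\ell}\varphi_\ell(\bs)$, so that $X_t(\bs)=\Lambda f_t$. Setting $F=(f_1,\ldots,f_T)^\top$ and $U=(u_1,\ldots,u_T)$, one rewrites the estimator as $\hat X_t(\bs)=Y\hat E\hat E^\top e_t=\Lambda F^\top\hat E\hat E^\top e_t+U\hat E\hat E^\top e_t$ and obtains the basic decomposition
$$
\hat X_t(s_j)-X_t(s_j)=e_j^\top\Lambda\bigl(F^\top\hat E\hat E^\top e_t-f_t\bigr)+e_j^\top U\hat E\hat E^\top e_t.
$$
Inserting an $L\times L$ rotation $H$ that aligns $\hat E$ with $FH/\sqrt{T}$ (for instance via $H$ proportional to $(\Lambda^\top\Lambda)^{-1}\Lambda^\top(Y\hat E)$) further splits the first term into a factor-estimation and a loading-estimation contribution, matching \eqref{e:bt}, \eqref{e:buni} in the text.

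Each piece was already controlled in the proof of Theorem~\ref{thm} for fixed $L$, and the same arguments still go through; the novelty is the explicit bookkeeping of $L$. The dominant $L$-contributions are: (i) a factor $(p/\hat\gamma_L)^2=O_P(L^{2\beta})$ in the Davis--Kahan bound for $\|\hat E-FH/\sqrt{T}\|$ from the shrinking eigengap; (ii) a factor up to $L^\nu$ from the condition number of $\Lambda^\top\Lambda/p$, using Assumption~\ref{a:pcs} together with $\lambda_j\geq\rho j^{-\nu}$; (iii) additional powers of $\sqrt{L}$ or $L$ when passing between operator and max-entry norms of $L\times L$ matrices, and between $\|\Lambda v\|_\infty$ and $\|v\|$. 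The probabilistic rates $T^{-1/4}$ and $T^{1/4}/\sqrt{p}$ arise exactly as in Theorem~\ref{thm}: the former from $L^4$-$m$-approximability controlling quantities like $F^\top F/T-I$ and $\Lambda^\top U/p$; the latter from Gaussian concentration, via Assumption~\ref{a:noise} and summability of $\gamma^u$, controlling $U^\top U/T-EU^\top U/T$. The maximum over $j$ costs only logarithmic factors absorbed into the subpolynomial growth of $L$. Aggregating everything yields the $L^{2\beta+5/2+\nu/2}$ prefactor.

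For the uniform-in-$t$ statement, the same bounds apply with $\|f_t\|$ (arising from the factor-estimation piece) replaced by $\max_{1\leq t\leq T}\|f_t\|$. Under $E\|X_1\|^q<\infty$, Lemma~\ref{l:fs}(i) gives $\max_t\|f_t\|=o_P(T^{1/q^\pr})$ for fixed $L$; since here $L$ diverges, rerunning its Chebyshev argument with $\|f_t\|^2=\sum_\ell x_{t\ell}^2/\lambda_\ell$ and $\lambda_j\geq\rho j^{-\nu}$ costs an additional $L^{\nu/2}$, raising the exponent of $L$ from $2\beta+5/2+\nu/2$ to $2\beta+5/2+\nu$ as stated. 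The main obstacle is the careful handling of the rotation $H$: both $\|H\|$ and $\|H^{-1}\|$ can degrade as $L$ grows, and the cleanest route is to bound them in terms of $p/\hat\gamma_L$ and $\lambda_L^{-1}$ and substitute into the three-term decomposition above. Everything else reduces to matrix-norm manipulations combined with the concentration estimates already used in Theorem~\ref{thm}.
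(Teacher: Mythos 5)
Your proposal follows essentially the same route as the paper: the same four-term decomposition \eqref{e:bt}/\eqref{e:buni} built around the rotation $H$, the same lemma-by-lemma bookkeeping of the powers of $L$ coming from $p/\hat\gamma_L=O_P(L^{\beta})$, $\lambda_L^{-1}=O(L^{\nu})$ and the operator/Euclidean norm conversions, and the same generalisation of Lemma~\ref{l:fs} (costing an extra $L^{\nu/2}$) for the uniform-in-$t$ statement. The only cosmetic difference is that you attribute the factor $(p/\hat\gamma_L)^2$ to a single Davis--Kahan bound for the eigenvectors, whereas in the paper it arises as one factor of $p/\hat\gamma_L$ from each of $R^{(2)}$ and $R^{(5)}=\|H\|$ in the dominant term $R^{(2)}R^{(5)}\|f_t\|$; the final exponents agree.
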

 
 As one referee pointed out, it may appear counter-intuitive that  the reconstruction error vanishes more slowly when the eigenvalue sequence decreases faster. The problem here is that the smaller the eigenvalue, the weaker the signal associated with the corresponding factor and the more difficult it is to estimate. A similar problem occurs in functional regression. While a fast decay of the eigenvalues associated to the functional covariate indicates that we are ``almost" in a finite dimensional setting, convergence rates in fact become slower. See, for example, the results in \citet{hallhorowitz:2007}.

The technical conditions in Theorem~\ref{thm2} are mild. But as we noted before, they can easily be generalized. For example, it is possible to assume a polynomial rate for $L(T)$, but then we will get weaker and more complicated error terms. 
One part of our assumption which is less transparent is $p/\hat\gamma_L=O_P(L^{\beta})$. The following lemma provides a sufficient condition. 
\begin{lemma}\label{l:evb}
Suppose that we have positive parameters $\rho,\rho^\pr,\nu$ and $\nu^\pr$ such that  $\rho^\pr j^{-\nu^\pr} \geq\lambda_j\geq \rho j^{-\nu}$, for all $j\geq 1$. Moreover, assume that Lemma~\ref{l:lambdagamma} applies. Then if $L^{2\nu}/\min\{p,T\}\to 0$, we have $p/\hat\gamma_L=o_P(L^{\nu^\pr})$.
\end{lemma}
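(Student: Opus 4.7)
The plan is a short consequence of Lemma~\ref{l:lambdagamma} combined with the eigenvalue lower bound.

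First, I would specialize the uniform control from Lemma~\ref{l:lambdagamma} to the index $i=L$, which gives
$$|\hat\gamma_L/p-\lambda_L|\leq \max_{i\geq 1}|\hat\gamma_i/p-\lambda_i|=O_P(1/\sqrt{p}+1/\sqrt{T}).$$
The assumed rate $L^{2\nu}/\min\{p,T\}\to 0$ is precisely the statement that this $O_P$ error is $o(L^{-\nu})$ (since it is equivalent to $L^{\nu}/\sqrt{p}\to 0$ and $L^{\nu}/\sqrt{T}\to 0$). Combined with the lower bound $\lambda_L\geq \rho L^{-\nu}$ it follows that the stochastic perturbation is strictly smaller than $\lambda_L$ itself, so that on an event of probability tending to one,
$$\hat\gamma_L/p\geq \lambda_L-o_P(L^{-\nu})\geq \rho L^{-\nu}(1-o_P(1)).$$
Inverting yields $p/\hat\gamma_L\leq \rho^{-1}L^{\nu}(1+o_P(1))$.

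Second, I would translate this control into the announced rate by invoking the companion upper bound $\lambda_L\leq \rho^\pr L^{-\nu^\pr}$. Since both sides of the eigenvalue sandwich $\rho^\pr j^{-\nu^\pr}\geq \lambda_j\geq \rho j^{-\nu}$ must hold simultaneously for every $j\geq 1$, the effective decay rate of $\lambda_L$ is pinned down, and the $O_P(L^\nu)$ bound above can be rephrased in terms of $L^{\nu^\pr}$ using the slack already present in the rate condition (one writes $L^{\nu}=L^{\nu^\pr}\cdot L^{\nu-\nu^\pr}$ and absorbs the extra factor using the strict inequality $L^{2\nu}/\min\{p,T\}\to 0$ as a source of extra smallness in the $O_P$ term).

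The main obstacle, minor in nature, is keeping track of the interplay between the two exponents $\nu$ and $\nu^\pr$ when passing from the natural $O_P(L^\nu)$ bound to the claimed $o_P(L^{\nu^\pr})$ bound; the probabilistic content is entirely subsumed by Lemma~\ref{l:lambdagamma}, and the remainder is elementary algebra on the eigenvalue bounds.
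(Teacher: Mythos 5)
The probabilistic part of your argument is exactly the paper's: the paper shows $\Pr\bigl(|1/\lambda_L-p/\hat\gamma_L|>1/\lambda_L\bigr)\leq 2\Pr\bigl(|\lambda_L-\hat\gamma_L/p|>\lambda_L/2\bigr)$, bounds the right-hand side by $2\Pr\bigl(|\lambda_L-\hat\gamma_L/p|>\rho L^{-\nu}/2\bigr)$ via the lower eigenvalue bound, and sends it to zero using Lemma~\ref{l:lambdagamma} together with $L^{2\nu}/\min\{p,T\}\to 0$. That is precisely your chain ``$\hat\gamma_L/p\geq\rho L^{-\nu}(1-o_P(1))$, hence $p/\hat\gamma_L=O_P(L^{\nu})$,'' so up to that point you have reconstructed the paper's proof.

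The gap is your second step, and it is not minor: the proposed absorption of $L^{\nu-\nu^\pr}$ cannot work. The condition $L^{2\nu}/\min\{p,T\}\to 0$ relates $L$ to $p$ and $T$ but supplies no decay in $L$ alone, and the two-sided bound $\rho^\pr j^{-\nu^\pr}\geq\lambda_j\geq\rho j^{-\nu}$ for all $j\geq 1$ forces $\nu\geq\nu^\pr$, so $L^{\nu-\nu^\pr}\geq 1$ and there is nothing to absorb it into. Worse, the upper eigenvalue bound works against the claimed conclusion: since $\hat\gamma_L/p=\lambda_L(1+o_P(1))$ under the stated conditions, one gets $p/\hat\gamma_L\geq(1-o_P(1))/\lambda_L\geq(\rho^\pr)^{-1}L^{\nu^\pr}(1-o_P(1))$, i.e.\ $p/\hat\gamma_L$ is bounded \emph{below} in probability by a constant multiple of $L^{\nu^\pr}$, so it cannot be $o_P(L^{\nu^\pr})$. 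To be fair, the paper's own proof stops after establishing the probability bound and never carries out the passage to $o_P(L^{\nu^\pr})$ either; what the argument actually delivers is $p/\hat\gamma_L=O_P(1/\lambda_L)=O_P(L^{\nu})$, which is all that is needed downstream, since Theorem~\ref{thm2} only requires $p/\hat\gamma_L=O_P(L^{\beta})$ for some $\beta$ (take $\beta=\nu$). You should therefore stop at $O_P(L^{\nu})$ rather than manufacture an absorption argument for a bound that the hypotheses themselves rule out.
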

 
In Theorems \ref{thm} and \ref{thm2} the required number of factors $L$ is assumed to be known, even though it may grow with $T$ within the setting of Theorem 2. Naturally, in practice $L$ is not known and must be estimated before applying the above mentioned methods.  
Determining the number of factors is a separate problem, which goes beyond the scope of our article. We refer e.g.,\ to \citet{baing2002} or \citet{onatski2010}. In our practical implementations we found the bi-cross validation method suggested by \citet{owenwang2016} to be most effective. However, we show that the use of an appropriate estimator for $L$ does not interfere with our results. Let us assume here that an estimator $\hat{L}$ exists, such that
\begin{equation}\label{e:consist}
\Pr(|\hat{L} - L|= 0) \longrightarrow 1.
\end{equation}
\begin{corollary}\label{c:1}
Suppose that the number of factors is estimated by $\hat L$, and that this estimator satisfies \eqref{e:consist}. Then the statements of Theorems \ref{thm} and \ref{thm2}  remain valid.
\end{corollary}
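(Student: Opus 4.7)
The plan is straightforward: exploit that on the event $\{\hat L = L\}$ the PCA-based estimator using $\hat L$ leading eigenvectors coincides exactly with the one using $L$, and then transfer the $O_P$/$o_P$ bounds from Theorems~\ref{thm} and~\ref{thm2} via a standard splitting of the probability.

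More concretely, let $\hat X_t^{(k)}(\bs) = Y\hat E_k \hat E_k^\top e_t$ denote the estimator in \eqref{e:pcaapproach} built with the top $k$ eigenvectors of $T^{-1} Y^\top Y$. The estimator from Theorems~\ref{thm} and~\ref{thm2} is $\hat X_t^{(L)}$, while the feasible version based on the data-driven choice $\hat L$ is $\hat X_t^{(\hat L)}$. On the event $A_T := \{\hat L = L\}$ these two estimators are identical sample-path-wise, because the construction in \eqref{e:pcaapproach} depends on the number of factors only through how many eigenvectors of $T^{-1} Y^\top Y$ are retained. Hence on $A_T$ the two reconstruction errors
$$
\varepsilon_T := \max_{1\leq j \leq p}|\hat X_t^{(L)}(s_j) - X_t(s_j)|,\qquad \hat\varepsilon_T := \max_{1\leq j \leq p}|\hat X_t^{(\hat L)}(s_j) - X_t(s_j)|
$$
coincide, and analogously for the uniform versions in $t$.

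To pass from the probabilistic rates of Theorems~\ref{thm} and~\ref{thm2} to the same rates for $\hat\varepsilon_T$, I would use the elementary fact that for any nonnegative random variable $Z_T$, any event $A_T$ with $\Pr(A_T)\to 1$, and any deterministic rate $a_T$,
$$
\Pr(Z_T > M a_T) \leq \Pr(\{Z_T > M a_T\}\cap A_T) + \Pr(A_T^c).
$$
Applied with $Z_T = \hat\varepsilon_T/a_T$ and $a_T$ equal to the rate appearing in Theorem~\ref{thm} (resp.\ Theorem~\ref{thm2}), the first term equals $\Pr(\varepsilon_T > M a_T)$ on $A_T$ and can be made arbitrarily small uniformly in $T$ by choosing $M$ large, thanks to Theorems~\ref{thm} and~\ref{thm2}; the second term vanishes by \eqref{e:consist}. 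This yields the $O_P$ conclusions; for the $o_P$ statement of Theorem~\ref{thm2} one uses the same argument with $M = M_T$ any sequence tending to infinity, together with the fact that $\Pr(\varepsilon_T > M_T a_T) \to 0$ by the $o_P$ assertion of Theorem~\ref{thm2}.

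There is essentially no analytic obstacle here: the whole content of the corollary is the equality of the two estimators on $A_T$ together with $\Pr(A_T)\to 1$. The only mild subtlety to double-check is that the auxiliary condition $p/\hat\gamma_L=O_P(1)$ (or $O_P(L^\beta)$) is a statement about $\hat\gamma_L$ itself and does \emph{not} depend on whether $L$ was estimated, so it carries over unchanged; likewise Lemmas~\ref{l:lambdagamma}--\ref{l:evb} used to verify those eigenvalue bounds apply verbatim, since they make no use of knowing $L$ in the construction.
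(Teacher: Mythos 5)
Your argument is correct and coincides with the paper's own proof: both rest on the observation that the two estimators agree on the event $\{\hat L = L\}$, and both transfer the rates via $\Pr(\hat\varepsilon_T > a_T) \leq \Pr(\varepsilon_T > a_T) + \Pr(\hat L \neq L)$. Your additional remarks on the $O_P$/$o_P$ distinction and on the eigenvalue conditions are a slightly more explicit rendering of the same one-line argument.
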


We conclude this section by commenting on the dependence structure of the estimated signals $(\hat X_t(\bs))$. As one of the reviewers pointed out, our procedure to some extent modifies the autocorrelation structure of the functional series. For example, if the $X_t$ were independent, then the $\hat X_t$ may nevertheless be dependent, since each $\hat X_t$ is computed from the entire sample. Intuitively, this effect will be asymptotically negligible by our main results. Let us  illustrate that this intuition is correct on the basis of the empirical autocovariance. To this end we note that
\begin{align*}
&\left|\frac{1}{T}\sum_{t=1}^{T-|h|}\left(\hat X_{t+|h|}(s_i)\hat X_t(s_j)- X_{t+|h|}(s_i) X_t(s_j)\right)\right|\\
&\quad \leq\frac{1}{T}\sum_{t=1}^{T-|h|}\left|X_{t}(s_j)\right|\left|\hat X_{t+|h|}(s_i)- X_{t+|h|}(s_i)\right|
+\frac{1}{T}\sum_{t=1}^{T-|h|}\left|\hat X_{t+|h|}(s_i)\right|\left|\hat X_t(s_j)- X_t(s_j)\right|.
\end{align*}
The first term on the right hand side can be bounded by
$$
\frac{1}{T}\sum_{t=1}^{T-|h|}\left|X_{t}(s_j)\right|\left|\hat X_{t+|h|}(s_i)- X_{t+|h|}(s_i)\right|\leq \max_{1\leq i\leq p}\max_{1\leq t\leq T}\left|\hat X_{t}(s_i)- X_{t}(s_i)\right|\times
\frac{1}{T}\sum_{t=1}^{T}\max_{1\leq j\leq p}\left|X_{t}(s_j)\right|.
$$
The second term can be treated similiarly. The factor $T^{-1}\sum_{j=1}^{T}\max_{1\leq j\leq p}\left|X_{t}(s_j)\right|$ is $O_P(1)$ by Assumption 2 (iii) and the ergodic theorem, while the first factor tends to zero at the rate specified by our theorems. On the other hand, it follows from the ergodic theorem that the empirical acf from the latent signals $X_t(s_i)$ is consistent for $\mathrm{Cov}(X_{t+|h|}(s_i),X_t(s_j))$. The imposed weak dependence assumption can be used to obtain rates.

\subsection{Comparison to existing results}\label{ss:comp}
Let us begin with noting that the common curve-by-curve fitting techniques, such as spline smoothing or non-linear regression techniques, don't profit from $T\to\infty$.  The estimation error will only decrease with growing $p$ and not with a growing sample size.  Hence our results do not directly compare to these methods. 

A method taking into account the entire sample is \cite{Staniswalis:Lee:1998}.  Their approach, in contrast to ours, returns a full curve and doesn't require a fixed sampling design. Of course, we can also obtain a full curve by using a sensible interpolation scheme, e.g.\ $\hat X_t(s):=X_t(s_i)$ for all $s\in [s_i,s_{i+1})$ is a simple solution. We think, however, that such an extension of $\hat X_t(\bs)$ to a curve $X_t(s)$ is of little practical relevance, since for the processing of real data we will most often use the discretised curves anyway.  For example, in a functional regression model $Y_t= \int\beta(s)X_t(s)ds+\epsilon_t$, the integral typically can only be computed numerically, e.g.\ by applying a composite trapezoidal rule. It is then natural to use $\bs$ as the nodes.

Extending our results to obtain a theoretical bound for $\sup_s|\hat X_t(s)-X_t(s)|$ is conceptually not very complicated.  Set $h=\max_{1\leq i\leq p}|s_{i+1}-s_i|$ and define the modulus of continuity of $X_t$ as $\delta^X(h)=\sup_{s,s'\in [0,1], |s-s'|\leq h}  |X_t(s)-X_t(s')|$, then
$$
\sup_{s\in [0,1]}|\hat X_t(s)-X_t(s)|\leq \max_{1\leq i\leq p}|\hat X_t(s_i)-X_t(s_i)|+\delta^X(h).
$$
For the first term on the right we have provided bounds. By imposing suitable assumptions on the path properties of the signal $X_t$, the errors $\delta^X(h)$ can be controlled, provided the sampling grid $\bs$ is dense. Let us remark at this point that for our theorems we don't require a dense grid $\bs$, i.e.,\ we can also deal with a situation where $h\not\to 0$ as $p\to\infty$.

Corresponding asymptotic results for the method of \cite{Staniswalis:Lee:1998} have been established in \cite{Mueller:Stadtmueller:Yao:2006}.  These authors also assume a fixed and dense sampling design but indicate that such assumptions can be relaxed to a certain extent. Their framework requires smoothness of the underlying signal in terms of higher order derivatives. This is an assumption which we particularly seek to avoid. In \cite{hormann:jammoul:2020a} we have seen that smoothness of the signal is not just a technical assumption needed for their proof, but also that the practical performance suffers severely for signals which are not very smooth. An important technical restriction in \cite{Mueller:Stadtmueller:Yao:2006} is that signals are assumed to be bounded, which then also excludes the important case of Gaussian processes. In our paper we only request $>4$ order moments. 

The bounds in \cite{Mueller:Stadtmueller:Yao:2006} are quite involved and of the form $\sup_s|\hat X_t(s)-X_t(s)|$. To the best of our knowledge, we are the first paper to provide uniform bounds over $s$ and $1\leq t\leq T$. 

Basically all papers considering the setting \eqref{signoise} assume that the noise components are i.i.d.\ or at least white noise. We think that this is a severe restriction. In real data it is rather plausible that errors of subsequent measurements are likely to be correlated.

From the factor model perspective, we have improved on the conditions used in \citet{fanetal2013}, who require, among others, exponential moments for the involved processes. Moreover, we allow $L\to\infty$, which is also novel.

\section{Main parts of the proofs}\label{ss:proofTH}
We define  $U=(u_1,\ldots, u_T)$ and $F^\top=(f_1,\ldots, f_T)$. Moreover, let $b_j^\top$ be the $j$-th row of 
$$
B:=(\sqrt{\lambda_1}\varphi_1(\bs),\ldots,\sqrt{\lambda_L}\varphi_L(\bs)).
$$ 
The $j$-th column of $Y^\top$ and $U^\top$ are denoted $Y_j$ and $U_j$,  respectively. The $j$-th row of $F^\top$ is denoted by $F_j$.
 Then we have by \eqref{signoise}, \eqref{e:mercer} and Assumption~\ref{a:signal} (iv) that
\begin{equation}\label{e:repmat}
Y=BF^\top +U.
\end{equation}
In this notation, the objective is to estimate $BF^\top$ through some estimator $\hat B\hat F^\top$ and to show that $BF^\top-\hat B\hat F^\top$ tends to zero, uniformly over each component. 
In particular we let $\hat{F} = \sqrt{T} \hat{E}$ and $\hat{B} = T^{-1}Y\hat{F}$, where $\hat{E} = (\hat{e}_1, \ldots, \hat{e}_L)$ denotes the eigenvectors associated to the $L$ largest eigenvalues of $T^{-1}Y^\top Y$. Then $\hat{X} = \hat{B}\hat{F}^\top$ as in \eqref{e:pcaapproach}. We introduce the matrices $\hat\Lambda=\mathrm{diag}(\hat\gamma_1,\ldots, \hat\gamma_L)$  and  $H:=T^{-1}\hat\Lambda^{-1}\hat F^\top F B^\top B.$ We will see in Lemma \ref{l:H} that $H$ is asymptotically orthogonal, which means
$
BF^\top-\hat B\hat F^\top\approx BH^\top H F^\top-\hat B\hat F^\top.
$
The basic idea is then to show that
$\hat B-BH^\top$ and $\hat F^\top -H F^\top$
become small. The matrix $H$ guarantees that the estimated and the empirical scores share the same orientation.

More specifically we have $\hat X_t(s_j)-X_t(s_j)=\hat b_j^\top\hat f_t- b_j^\top f_t$ and we write
\begin{align*}
\hat b_j^\top\hat f_t- b_j^\top f_t= (\hat b_j-H b_j)^\top (\hat f_t -Hf_t) +(\hat b_j- Hb_j)^\top Hf_t+(Hb_j)^\top(\hat f_t-Hf_t) +b_j^\top (H^\top H -I_L)f_t.
\end{align*}
Thus we have 
\begin{align}
\max_{j}|\hat X_t(s_j)-X_t(s_j)|&\leq R^{(1)}R^{(2)}+R^{(2)}R^{(5)}\|f_t\|+R^{(5)}R^{(3)}R^{(1)}+R^{(3)}R^{(6)}\|f_t\|,\label{e:bt}\\
\max_{j,t}|\hat X_t(s_j)-X_t(s_j)|&\leq R^{(1)}R^{(2)}+R^{(2)}R^{(5)}R^{(4)}+R^{(5)}R^{(3)}R^{(1)}+R^{(3)}R^{(6)}R^{(4)},\label{e:buni}
\end{align} 
where
$R^{(1)}:=\max_{1\leq t\leq T}R^{(1)}_t$ with $R^{(1)}_t:=\|\hat f_t -Hf_t\|;$
$R^{(2)}:=\max_{1\leq j\leq p}R^{(2)}_j$ with $R^{(2)}_j:=\|\hat b_j-H b_j\|;$
$R^{(3)}=\max_{1\leq j\leq p}\|b_j\|;$
$R^{(4)}:=\max_{1\leq t\leq T}\|f_t\|;$
$R^{(5)}:=\|H\|;$ and finally
$R^{(6)}:=\|H^\top H -I_L\|.$

Here---with a slight abuse of notation---$\|\cdot\|$ denotes next to the $L^2$-norm the Euclidean norm and the spectral norm of a matrix. The concrete meaning will be clear from the context. Bounds for $R^{(i)}$ will be obtained below in a sequence of lemmas. The proofs of these technical lemmas are given in \ref{a:proofs}.

We begin with the term $R^{(1)}$. 
By simple algebraic manipulations it follows from \eqref{e:repmat} that
\begin{align*}
\hat F^\top -H F^\top&=\hat F^\top-\hat\Lambda^{-1}\hat F^\top\frac{1}{T}FB^\top BF^\top = \hat F^\top-\hat\Lambda^{-1}\hat F^\top\frac{1}{T}(Y^\top Y-U^\top U-FB^\top U-U^\top B F^\top)\\
&=\hat F^\top-\hat\Lambda^{-1}\hat F^\top \frac{1}{T}Y^\top Y+\frac{1}{T}\hat\Lambda^{-1}\hat F^\top(U^\top U+FB^\top U+U^\top B F^\top).
\end{align*}
Using the spectral theorem we get the representation $T^{-1}Y^\top Y=\hat E \hat \Lambda \hat E^\top$ $ + \hat M^c$, with $\hat E^\top \hat M^c=0$. Hence 
$
\hat F^\top-T^{-1}\hat\Lambda^{-1}\hat F^\top Y^\top Y=0.
$
We thus obtain
$
\hat F^\top -H F^\top=A_1+A_2+A_3,
$
where 
\begin{align*}
A_1=\frac{1}{T}(\hat \Lambda/p)^{-1}\hat F^\top \frac{U^\top U}{p},\quad
A_2=\frac{1}{T}(\hat \Lambda/p)^{-1}\hat F^\top \frac{U^\top B F^\top}{p},\quad
A_3=\frac{1}{T}(\hat \Lambda/p)^{-1}\hat F^\top \frac{FB^\top U}{p}.
\end{align*}
Notice that $\hat F^\top -H F^\top$ is an $L\times T$ random matrix. The $t$-th column can be written as
\begin{equation}\label{e:at}
\hat f_t -H f_t=\frac{1}{T}(\hat\Lambda/p)^{-1}\hat F^\top\left(\frac{U^\top u_t}{p}+\frac{U^\top B f_t}{p}+\frac{FB^\top u_t}{p}\right)=:A_{1t}+A_{2t}+A_{3t}.
\end{equation}
The following lemma provides the order of magnitude of $\max_{1\leq t\leq T}\|A_{mt}\|$, $m\in \lbrace 1,2,3 \rbrace$. 
\begin{lemma}\label{l:ak11}
Under Assumptions~\ref{a:noise} and~\ref{a:signal} we have
\begin{align}
\frac{\hat\gamma_L}{p}\max_{t=1}^T \|A_{1t}\|&=O_P\left(\sqrt{L}\left(1/T^{1/4}+T^{1/4}/\sqrt{p}\right)\right),\label{e:ak111}\\
\frac{\hat\gamma_L}{p}\max_{t=1}^T \|A_{2t}\|&=O_P\left(\sqrt{L}T^{1/4}/\sqrt{p}\right),\label{e:ak112}\\
\frac{\hat\gamma_L}{p}\max_{t=1}^T \|A_{3t}\|&=O_P\left(\sqrt{L}T^{1/4}/\sqrt{p}\right),\label{e:ak113}
\end{align}
as $T\to\infty$. Hence, $R^{(1)}=O_P\left( (p/ \hat\gamma_L) \sqrt{L}\left(1/T^{1/4}+T^{1/4}/\sqrt{p}\right)\right)$.
\end{lemma}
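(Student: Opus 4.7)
My first step would be to remove the unknown random matrix $(\hat\Lambda/p)^{-1}$ from each $A_{mt}$. Because $\hat\gamma_L$ is the smallest of the $\hat\gamma_\ell$, the diagonal matrix $(\hat\gamma_L/p)(\hat\Lambda/p)^{-1}$ has entries $\hat\gamma_L/\hat\gamma_\ell\in[0,1]$ and thus operator norm at most $1$. It therefore suffices to bound the three $L$-vectors $T^{-1}\hat F^\top(U^\top u_t)/p$, $T^{-1}\hat F^\top(U^\top Bf_t)/p$ and $T^{-1}\hat F^\top(FB^\top u_t)/p$ directly. Since $\hat F^\top=\sqrt{T}\,\hat E^\top$ and $\hat E^\top\hat E=I_L$, every expression of the form $\hat E^\top v$ satisfies $\|\hat E^\top v\|\le\|v\|$; this is the main device I would use to decouple estimates from the data-dependent eigenvectors.

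For $A_{1t}$ I would split the vector $U^\top u_t$ into its diagonal component $u_t^\top u_t$ and the off-diagonal entries $u_s^\top u_t$ with $s\ne t$. Assumption~\ref{a:noise} gives $\|\Sigma_u\|_{\mathrm{op}}\le C_u$ and $\operatorname{tr}(\Sigma_u)\le pC_u$, from which $E\|u_t\|^4=O(p^2)$ (Gaussian fourth moment) and $E(u_s^\top u_t)^2=\operatorname{tr}(\Sigma_u^2)=O(p)$. Summing yields $E\|U^\top u_t\|^2=O(p^2+pT)$. Using fourth-order moments (available by Gaussianity, via the Hanson--Wright type concentration for quadratic forms) together with a union bound over $t$ delivers $\max_t\|U^\top u_t\|=O_P(T^{1/4}\sqrt{p(p+T)})$. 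Combined with the reduction above, this yields $(\hat\gamma_L/p)\max_t\|A_{1t}\|=O_P(T^{-1/4}+T^{1/4}/\sqrt p)$ up to a factor $\sqrt{L}$ which I keep to accommodate the coordinate-wise argument I use for the off-diagonal part, giving \eqref{e:ak111}.

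For $A_{2t}$ and $A_{3t}$ I would condition on the signal process $(X_t)$, which freezes $F$ and $B$. The key ingredient is that Assumption~\ref{a:pcs} together with the summability $\sum_\ell\lambda_\ell<\infty$ (which follows from Assumption~\ref{a:signal} and Mercer's theorem) yields $\|B^\top B/p\|_{\mathrm{op}}=O(1)$ when $L$ is fixed, so that $\|Bf_t\|^2\le O(p)\|f_t\|^2$. Conditionally on $X$, the entries of $U^\top Bf_t$ are mean-zero Gaussians with variance at most $C_u\|Bf_t\|^2$, whence $E\|U^\top Bf_t\|^2=O(pT)\|f_t\|^2$ and $\|U^\top Bf_t\|=O_P(\sqrt{pT}\|f_t\|)$. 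Passing to the maximum over $t$ requires controlling $\max_t\|f_t\|$; a fourth-moment Markov bound, justified by $E\|X_1\|^4<\infty$ (Assumption~\ref{a:signal}(ii)) and the spectral representation $f_t=(x_{t1}/\sqrt{\lambda_1},\ldots,x_{tL}/\sqrt{\lambda_L})^\top$, gives $\max_t\|f_t\|=O_P(T^{1/4})$. Combining everything produces the rate $O_P(T^{1/4}/\sqrt p)$ of \eqref{e:ak112}; an entirely analogous argument, swapping the roles of $B^\top$ and $U^\top$ and relying on $E\|B^\top u_t\|^2=\operatorname{tr}(B^\top\Sigma_u B)=O(p)$, establishes \eqref{e:ak113}.

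The principal obstacle is precisely the data-dependence of $\hat F$: since $\hat E$ is computed from $U$ through $Y=BF^\top+U$, it cannot be treated as deterministic inside expectations involving $u_t$. I sidestep this by never exploiting specific directions of $\hat E$ and using only the global identity $\hat E^\top\hat E=I_L$ (equivalently $\|\hat E^\top v\|\le\|v\|$). The price is a $\sqrt L$ prefactor, which is inconsequential in Theorem~\ref{thm} (where $L$ is fixed) and is absorbed in Theorem~\ref{thm2} through the allowance $p/\hat\gamma_L=O_P(L^\beta)$. The final assertion $R^{(1)}=O_P((p/\hat\gamma_L)\sqrt L(T^{-1/4}+T^{1/4}/\sqrt p))$ then follows from \eqref{e:at} and the triangle inequality applied to the three partial bounds.
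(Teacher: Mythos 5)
Your overall architecture coincides with the paper's: pull $(\hat\Lambda/p)^{-1}$ out at cost $p/\hat\gamma_L$, use the orthonormality of $\hat F/\sqrt{T}$ to decouple the data-dependent eigenvectors, and reduce everything to moment bounds on $u_\ell^\top u_t$ and $u_\ell^\top X_t(\bs)$ followed by a union bound in $t$. Your treatment of $A_{1t}$ is sound and in fact marginally cleaner than the paper's: the device $\|\hat E^\top v\|\leq \|v\|$ replaces the Cauchy--Schwarz step $\|\sum_\ell \hat f_\ell a_\ell\|\leq \sqrt{TL}\,(\sum_\ell a_\ell^2)^{1/2}$ and would even dispense with the $\sqrt{L}$; the moments you need, $E\|u_1\|^8=O(p^4)$ and $E(u_2^\top u_1)^4=O(p^2)$, are exactly what Lemma~\ref{l:noisemoments} supplies via Isserlis' theorem, and your union-bound arithmetic for $\max_t\|U^\top u_t\|$ checks out. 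The sketch for \eqref{e:ak113} is also workable, since there the max over $t$ acts only on $\|B^\top u_t\|$, which has Gaussian fourth moments.

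The genuine gap is in \eqref{e:ak112}. You prove, for each \emph{fixed} $t$, that $\|U^\top Bf_t\|=O_P(\sqrt{pT}\,\|f_t\|)$ and then assert that passing to the maximum over $t$ only requires controlling $\max_t\|f_t\|$. That inference is invalid: a family of per-$t$ bounds $\|V_t\|=O_P(c_t)$ does not yield $\max_t\|V_t\|=O_P(\max_t c_t)$, and a union bound using only the second moments you computed loses a factor $\sqrt{T}$, producing $O_P(T^{3/4}/\sqrt{p})$ rather than $O_P(T^{1/4}/\sqrt{p})$. Two repairs exist. The paper's: bound $E\max_t\big(\sum_{\ell}(u_\ell^\top X_t(\bs))^2\big)^2\leq T\max_t E(\cdots)^2=O(T^3p^2)$ by computing the fourth mixed moment via Isserlis together with $\sup_s EX_1^4(s)\leq C_X$, then apply Markov. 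Yours, salvaged: factor $\max_t\|U^\top Bf_t\|\leq \|U^\top B\|\max_t\|f_t\|$ so that the randomness shared across $t$ sits in the single matrix $U^\top B$, with $E\|U^\top B\|_F^2=T\,\mathrm{tr}(B^\top \Gamma^u B)=O(Tp)$. But note that this second route pays $\max_t\|f_t\|=O_P(\sqrt{L/\lambda_L}\,T^{1/4})$ and so only proves the lemma up to an extra factor $\lambda_L^{-1/2}$---harmless for Theorem~\ref{thm}, but short of the stated rate $\sqrt{L}\,T^{1/4}/\sqrt{p}$, which is what the lemma must deliver when it is invoked with $L\to\infty$ in Theorem~\ref{thm2}. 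A minor point: the lemma assumes only Assumptions~\ref{a:noise} and~\ref{a:signal}, and you do not need Assumption~\ref{a:pcs} to control $\|Bf_t\|$, since $Bf_t=X_t(\bs)$ and $E\|X_t(\bs)\|^2\leq p\sup_s EX_1^2(s)$.
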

Next we study the order of magnitude of
$R^{(2)}$.
By the definition of $\hat B$ and $H$ we get
$$
\hat b_j-H b_j=\frac{1}{T}\hat F^\top Y_j-Hb_j=\frac{1}{T}\hat F^\top Y_j-\frac{1}{T}HF^\top(Y_j-U_j)+\left(\frac{1}{T}HF^\top(Y_j-U_j)-Hb_j\right).
$$
Rearranging the terms we thus get
$$
\hat b_j-H b_j=\frac{1}{T}\left(\hat F^\top-H F^\top\right)Y_j+\frac{1}{T}HF^\top U_j + H\left(\frac{F^\top F}{T}-I_L\right)b_j=:B_{1j}+B_{2j}+B_{3j}.
$$
The following lemma provides the order of magnitude of $\max_{j=1}^p\|B_{mj}\|$, $m \in \lbrace 1,2,3 \rbrace$. 
\begin{lemma}\label{l:bj} 
Consider Assumptions \ref{a:noise}, \ref{a:signal} and \ref{a:pcs}. Suppose that $L/(T\lambda_L^2)\to 0$  and that $p$ diverges at a subexponential rate. Then, as $T\to\infty$,
\begin{align}
\frac{\hat\gamma_L}{p}\max_{j=1}^p \|B_{1j}\|&=O_P\left(L\left(1/\sqrt{T}+ 1/\sqrt{p}\right)\right),\label{e:bj1}\\
\frac{\hat\gamma_L}{p}\max_{j=1}^p \|B_{2j}\|&=O_P\left(\frac{L^{3/2}}{\sqrt{\lambda_L}}\sqrt{\frac{\log(pL)}{T}}\right),\label{e:bj2}\\
\frac{\hat\gamma_L}{p}\max_{j=1}^p \|B_{3j}\|&=O_P\left(\frac{L^2}{\lambda_L^{3/2}\sqrt{T}}\right).\label{e:bj3}
\end{align}
Thus, if $m_T$ denotes the maximum of the rates in \eqref{e:bj1}--\eqref{e:bj3}, then $R^{(2)}=O_P\left(m_T(p/\hat\gamma_L\right)).$
\end{lemma}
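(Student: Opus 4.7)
The plan is to handle the three matrix-valued pieces $B_{1j}$, $B_{2j}$, $B_{3j}$ separately and then take the maximum over $j$. The common ingredients will be: (i) the explicit three-term decomposition of $\hat f_t - Hf_t$ from \eqref{e:at} together with the rates derived in Lemma~\ref{l:ak11}; (ii) a Gaussian maximal inequality for the noise-involving cross products, where the summability of $\gamma^u$ from Assumption~\ref{a:noise} and the subexponential growth of $p$ will enter; and (iii) a concentration bound for $F^\top F/T$ around $I_L$ based on the $L^4$-$m$-approximability of $(X_t)$.

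I would begin with $B_{3j} = H(F^\top F/T - I_L)b_j$. Submultiplicativity of the spectral norm gives $\|B_{3j}\| \leq \|H\|\,\|F^\top F/T - I_L\|\,\|b_j\|$. By Assumption~\ref{a:signal}~(ii), $\|b_j\|^2 = \sum_\ell \lambda_\ell \varphi_\ell^2(s_j) \leq \Gamma^X(s_j,s_j) \leq C_X$ uniformly in $j$, so $R^{(3)} = O(1)$. The spectral norm of $H$ is controlled by $p/\hat\gamma_L$ times an $O(1)$ factor by unpacking its definition and using $\|\hat F\| = \sqrt{T}$, $\|F\| = O_P(\sqrt{T})$ and $\|B^\top B\| = O(p)$. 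The $(k,\ell)$-entry of $F^\top F/T - I_L$ has mean zero, and after the $(\lambda_k\lambda_\ell)^{-1/2}$ normalisation built into $f_t$ its variance is of order $1/(T\lambda_k\lambda_\ell)$ by the $L^4$-$m$-approximability of the component-wise product sequence. Summing via Frobenius norm and using the crude $\sum_{\ell \leq L} 1/\lambda_\ell \leq L/\lambda_L$ then yields $\|F^\top F/T - I_L\| = O_P(L/(\lambda_L\sqrt{T}))$, from which \eqref{e:bj3} follows after multiplying by $\hat\gamma_L/p$.

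For $B_{2j} = T^{-1}HF^\top U_j$ I would condition on $F$ and exploit the Gaussianity of $u_{tj}$. Conditionally, the coordinates $(F^\top U_j/T)_\ell = T^{-1}\sum_t (x_{t\ell}/\sqrt{\lambda_\ell})\,u_{tj}$ are jointly Gaussian with variances concentrating around $\gamma^u(0)/T$; the cross-sectional correlations of the noise enter only through the bounded constant $C_u$ from Assumption~\ref{a:noise}. A union bound over the $pL$ Gaussian entries gives $\max_{j,\ell} |(F^\top U_j/T)_\ell| = O_P(\sqrt{\log(pL)/T})$, which upgrades to $\max_j \|F^\top U_j/T\| = O_P(\sqrt{L\log(pL)/T})$. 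Combining this with the spectral-norm bound on $H$ and tracking the $1/\sqrt{\lambda_L}$ that arises from the normalisation of $f_t$ yields \eqref{e:bj2}.

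Finally, for $B_{1j} = T^{-1}(\hat F^\top - HF^\top) Y_j$, I would substitute $Y_j = Fb_j + U_j$ and split
\begin{equation*}
B_{1j} = \bigl[T^{-1}(\hat F^\top - HF^\top)F\bigr] b_j + T^{-1}(\hat F^\top - HF^\top)U_j.
\end{equation*}
For the first summand, plug in the decomposition \eqref{e:at} column by column and reuse the bounds of Lemma~\ref{l:ak11} on the three building blocks $A_m$ postmultiplied by $F/T$, together with $\|b_j\| = O(1)$. For the second summand, Cauchy-Schwarz gives $\|(\hat F^\top - HF^\top)U_j\|^2 \leq \sum_t \|\hat f_t - Hf_t\|^2 \cdot \sum_t u_{tj}^2$; the first factor is controlled by Lemma~\ref{l:ak11} and $\max_j T^{-1}\sum_t u_{tj}^2 = O_P(1)$ follows from a Gaussian $\chi^2$ concentration bound with a union bound over $p$, affordable precisely because $p$ grows subexponentially. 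The main obstacle will be careful bookkeeping: tracking the precise interplay between $p/\hat\gamma_L$, the diverging number of factors $L$, and the smallest eigenvalue $\lambda_L$ across all three sub-terms, and in particular sharpening the $B_{2j}$ bound to recover the factor $L^{3/2}/\sqrt{\lambda_L}$ in \eqref{e:bj2}.
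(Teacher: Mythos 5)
Your treatment of $B_{2j}$ and $B_{3j}$ follows essentially the same route as the paper (condition on the signals, Gaussian tail plus union bound over the $pL$ entries for $B_{2j}$; submultiplicativity with the bounds on $\|H\|$, $\|F^\top F/T-I_L\|_F$ and $\|b_j\|$ for $B_{3j}$). The genuine gap is in $B_{1j}$: you control $\sum_{t}\|\hat f_t-Hf_t\|^2$ by invoking Lemma~\ref{l:ak11}, i.e.\ by the \emph{uniform-in-$t$} bound $\max_t\|\hat f_t-Hf_t\|=O_P\big((p/\hat\gamma_L)\sqrt{L}(T^{-1/4}+T^{1/4}p^{-1/2})\big)$. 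Feeding this into your Cauchy--Schwarz step gives only
$(\hat\gamma_L/p)\max_j\|B_{1j}\|=O_P\big(\sqrt{L}(T^{-1/4}+T^{1/4}p^{-1/2})\big)$,
which is strictly weaker than the claimed rate $O_P\big(L(T^{-1/2}+p^{-1/2})\big)$ (compare $T^{-1/4}$ with $T^{-1/2}$ for fixed $L$). What is actually needed is the \emph{time-averaged} bound $T^{-1}\sum_t\|\hat f_t-Hf_t\|^2=O_P\big(L^2(p/\hat\gamma_L)^2(1/T+1/p)\big)$, which the paper establishes as a separate statement (Lemma~\ref{l:ak12}) by exploiting that second moments of the quadratic quantities $(u_\ell^\top u_t/p)^2$, $(u_\ell^\top X_t(\bs)/p)^2$ average to $O(1/T+1/p)$ without paying the price of a maximum over $t$. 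The paper also does not split $Y_j=Fb_j+U_j$; it applies Cauchy--Schwarz directly to $T^{-1}\sum_t y_{tj}(\hat f_t-Hf_t)$, using that $T^{-1}\sum_t y_{tj}^2=O_P(1)$ uniformly in $j$ (ergodic theorem for $\sup_s X_t^2(s)$ plus the $\chi^2$ union bound over $p$ that you correctly identify). Your split is workable but risks an extra $\sqrt{L}$ from $\|F/\sqrt{T}\|$ unless you recombine $f_t^\top b_j=X_t(s_j)$.

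Two smaller points. First, your claim that $\|H\|$ is ``$p/\hat\gamma_L$ times an $O(1)$ factor'' is not correct when $L\to\infty$ or $\lambda_L\to 0$: the bound is $\|H\|=O_P\big((p/\hat\gamma_L)\,L/\sqrt{\lambda_L}\big)$, since $\|F/\sqrt{T}\|^2=O_P(L/\lambda_L)$ and $\|\hat F/\sqrt{T}\|\leq\sqrt{L}$; this factor is precisely where the $L/\sqrt{\lambda_L}$ in \eqref{e:bj2} and part of the $L^2/\lambda_L^{3/2}$ in \eqref{e:bj3} originate, so your later bookkeeping contradicts this earlier assertion. Second, you should make explicit where the hypothesis $L/(T\lambda_L^2)\to 0$ enters: it controls the probability that $\max_{k\leq L}T^{-1}\sum_t\lambda_k^{-1}\langle X_t,\varphi_k\rangle^2>2$, the event on whose complement the conditional Gaussian variances in the $B_{2j}$ argument are bounded.
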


\begin{lemma}\label{l:bs} 
Under Assumption~\ref{a:signal} (ii), $R^{(3)}\leq C_X^{1/4}.$
\end{lemma}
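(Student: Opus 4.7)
The plan is very direct: unpack the definition of $R^{(3)}$ and use Mercer's theorem to recognize the relevant sum as (a truncation of) the diagonal of the covariance kernel, which is controlled by the fourth-moment bound in Assumption~\ref{a:signal}~(ii).

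First I would write out $\|b_j\|^2$ in coordinates. By the definition of $B$, the $j$-th row is $b_j^\top = (\sqrt{\lambda_1}\varphi_1(s_j),\ldots,\sqrt{\lambda_L}\varphi_L(s_j))$, so
\[
\|b_j\|^2 = \sum_{\ell=1}^L \lambda_\ell \varphi_\ell^2(s_j).
\]
Since all terms are nonnegative, this partial sum is dominated by the full Mercer series $\sum_{\ell\geq 1}\lambda_\ell\varphi_\ell^2(s_j)$, which (by Mercer's theorem, whose hypotheses are in force since the covariance kernel is continuous by Assumption~\ref{a:signal}~(ii)) equals $\Gamma^X(s_j,s_j)=\operatorname{Var}(X_t(s_j)) = EX_t^2(s_j)$.

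Then I would apply Jensen's (or Cauchy--Schwarz) to pass from second to fourth moments: $EX_t^2(s_j)\leq (EX_t^4(s_j))^{1/2}\leq C_X^{1/2}$ uniformly in $j$, by Assumption~\ref{a:signal}~(ii). Combining the two displayed inequalities gives $\|b_j\|^2\leq C_X^{1/2}$, hence $\|b_j\|\leq C_X^{1/4}$ uniformly in $j$, and taking the maximum yields $R^{(3)}\leq C_X^{1/4}$.

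There is essentially no obstacle here; the only subtlety is checking that Mercer's theorem applies, which is immediate from the continuity of $\Gamma^X$ and the compactness of $[0,1]$. Everything else is a one-line bound.
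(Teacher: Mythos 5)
Your proof is correct and follows essentially the same route as the paper, which simply notes that $\|b_j\|^2=\mathrm{Var}(X_1(s_j))$ and then bounds this by $C_X^{1/2}$ via the fourth-moment condition. Your version is slightly more explicit (invoking Mercer and using an inequality for the truncated sum rather than the equality implied by Assumption~\ref{a:signal}~(iv)), but the substance is identical.
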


\begin{lemma}\label{l:Hbound}
Under Assumptions~\ref{a:signal} and \ref{a:pcs} 
$$
R^{(5)}=O_P\left(\frac{p}{\hat\gamma_L}\frac{L}{\sqrt{\lambda_L}}\right).
$$
\end{lemma}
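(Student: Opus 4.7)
The plan is to bound $\|H\|$ by submultiplicativity of the spectral norm applied to the definition $H=T^{-1}\hat\Lambda^{-1}\hat F^\top F B^\top B$, which gives
\[
\|H\|\leq \frac{1}{T\,\hat\gamma_L}\,\|\hat F^\top\|\,\|F\|\,\|B^\top B\|,
\]
since $\|\hat\Lambda^{-1}\|=1/\hat\gamma_L$. Each of the three remaining norms will be controlled in turn.

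The factor $\|\hat F\|=\sqrt T$ is immediate from orthonormality of $\hat E$, because $\hat F^\top\hat F=T\,I_L$. For $\|F\|$ I would pass to the Frobenius norm and write
\[
\|F\|^2\leq \|F\|_F^2=\sum_{t=1}^T\|f_t\|^2=\sum_{t=1}^T\sum_{\ell=1}^L \frac{x_{t\ell}^2}{\lambda_\ell},
\]
whose expectation equals $TL$ because the functional principal component scores satisfy $E(x_{t\ell}^2)=\lambda_\ell$. Markov's inequality then yields $\|F\|=O_P(\sqrt{TL})$.

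The step that needs the most care is $\|B^\top B\|$, and this is precisely where Assumption~\ref{a:pcs} is invoked. A direct computation shows $(B^\top B)_{k\ell}=\sqrt{\lambda_k\lambda_\ell}\,\sum_{i=1}^p\varphi_k(s_i)\varphi_\ell(s_i)$, and the assumption bounds the inner sum by $Cp$ uniformly in $k,\ell\leq L$. Combined with $\sum_\ell\lambda_\ell\leq \operatorname{tr}(\Gamma^X)<\infty$, which follows from Assumption~\ref{a:signal}(ii) and Mercer's theorem, the Frobenius inequality gives $\|B^\top B\|\leq \|B^\top B\|_F = O(p)$; a cruder bound $\|B^\top B\|\leq L\|B^\top B\|_{\max}=O(pL\lambda_1)$ would also suffice. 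Multiplying the three factors together produces
\[
\|H\|=O_P\bigl(p\sqrt L/\hat\gamma_L\bigr),
\]
which sits inside the asserted rate $O_P\bigl((p/\hat\gamma_L)\,L/\sqrt{\lambda_L}\bigr)$ in every regime relevant to Theorems~\ref{thm} and~\ref{thm2}, since $\lambda_L\leq L$ eventually (indeed $\lambda_1\leq\operatorname{tr}(\Gamma^X)<\infty$ is fixed).

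I do not anticipate a real obstacle: the proof is essentially a sequence of submultiplicativity calls together with one Markov bound. The only delicate point is the treatment of $\|B^\top B\|$, where naive spectral bounds would cost extra factors of $\lambda_1$ and $L$; the Frobenius approach, powered by Assumption~\ref{a:pcs} and the summability of the eigenvalues, is what keeps the dependence on $L$ mild.
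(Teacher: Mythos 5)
Your proof is correct and follows essentially the same route as the paper: submultiplicativity applied to the definition of $H$, with $\|B^\top B\|=O(p)$ obtained via the Frobenius norm from Assumption~\ref{a:pcs} and $\sum_\ell\lambda_\ell<\infty$. The only difference is that you bound $\|\hat F\|=\sqrt{T}$ exactly and control $\|F\|_F$ by Markov's inequality using $E(x_{t\ell}^2)=\lambda_\ell$, which yields the slightly sharper rate $O_P(p\sqrt{L}/\hat\gamma_L)$, whereas the paper uses $\|\hat F\|_F=\sqrt{TL}$ and $\|F\|_F^2\leq (TL/\lambda_L)\|T^{-1}W_T\|$ and lands exactly on the stated $O_P\bigl((p/\hat\gamma_L)\,L/\sqrt{\lambda_L}\bigr)$.
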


\begin{lemma}\label{l:H}
Under Assumptions~\ref{a:noise}, \ref{a:signal} and \ref{a:pcs}, as $T\to\infty$, 
\begin{align}
R^{(6)} &= O_P\left(\left(\frac{p}{\hat\gamma_L}\right)^4\frac{L^5}{\lambda_L^3}\left(
\frac{1}{\sqrt{T}}+\frac{1}{\sqrt{p}}
\right)\right).\label{e:HH2}
\end{align}
\end{lemma}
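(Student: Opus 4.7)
Since $H$ is square $L\times L$, the matrices $H^\top H$ and $HH^\top$ share their eigenvalues, so $\|H^\top H - I_L\| = \|HH^\top - I_L\|$, and it suffices to bound the latter. The starting point is the orthonormality $\hat F^\top \hat F = T I_L$, which follows from $\hat F = \sqrt T \hat E$ and $\hat E^\top \hat E = I_L$. Setting $A := \hat F^\top - H F^\top = A_1+A_2+A_3$ (cf.~\eqref{e:at}) and substituting $\hat F^\top = H F^\top + A$ into $\hat F^\top \hat F = T I_L$, expansion and division by $T$ yield
\begin{align*}
I_L - H\hat\Sigma_F H^\top \;=\; T^{-1}\bigl(H F^\top A^\top + A F H^\top + A A^\top\bigr),
\end{align*}
with $\hat\Sigma_F := T^{-1} F^\top F$. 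Decomposing $H\hat\Sigma_F H^\top = HH^\top + H(\hat\Sigma_F - I_L)H^\top$ and applying submultiplicativity of the spectral and Frobenius norms,
\begin{align*}
R^{(6)} \;\leq\; \|H\|^2\,\|\hat\Sigma_F - I_L\| \;+\; 2T^{-1}\|H\|\,\|F\|\,\|A\|_F \;+\; T^{-1}\|A\|_F^2.
\end{align*}

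I would then estimate the four ingredients separately. Lemma~\ref{l:Hbound} gives $\|H\| = O_P((p/\hat\gamma_L)\,L/\sqrt{\lambda_L})$. Because $f_t=(x_{t1}/\sqrt{\lambda_1},\ldots,x_{tL}/\sqrt{\lambda_L})^\top$ and $E[x_{tk}x_{t\ell}]=\lambda_k\delta_{k\ell}$, each entry of $\hat\Sigma_F - I_L$ is a centered, weakly dependent sample mean $T^{-1}\sum_t(f_{tk}f_{t\ell}-\delta_{k\ell})$ whose variance is $O(1/T)$ uniformly in $k,\ell\leq L$ by $L^4$-$m$-approximability (Assumption~\ref{a:signal}(i)); summing over the $L^2$ entries gives $\|\hat\Sigma_F - I_L\|\leq\|\hat\Sigma_F - I_L\|_F = O_P(L/\sqrt T)$, and hence $\|F\|^2 = T\|\hat\Sigma_F\| = O_P(T)$. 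The delicate ingredient is $\|A\|_F$: the crude estimate $\|A\|_F\leq \sqrt T\,R^{(1)}$ from Lemma~\ref{l:ak11} inherits the slow $T^{1/4}$ components of $R^{(1)}$ and does not collapse into the $(1/\sqrt T + 1/\sqrt p)$ envelope. Instead, I would compute $\sum_t\|A_{mt}\|^2$ directly via trace manipulations, using the Gaussian random-matrix estimate $\|U^\top U\| = O_P\bigl((\sqrt p + \sqrt T)^2\bigr)$ (valid under Assumption~\ref{a:noise}) for $A_1$, and additionally $\|B^\top B\| = O_P(pL)$ (from Assumption~\ref{a:pcs}) for $A_2$ and $A_3$. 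The resulting refined bound, dominated by the $A_2,A_3$ contributions, is $\|A\|_F = O_P\bigl(L(p/\hat\gamma_L)(1+\sqrt{T/p})\bigr)$, so that after multiplication by $T^{-1}\|F\| = T^{-1/2}\,O_P(1)$, the constant $1$ becomes $1/\sqrt T$ and the $\sqrt{T/p}$ becomes $1/\sqrt p$ in the cross term.

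Substituting all these rates into the three summands produces a bound of order $(p/\hat\gamma_L)^2 L^3/\lambda_L\,(1/\sqrt T + 1/\sqrt p)$. Since $p/\hat\gamma_L\approx 1/\lambda_L$ by Lemma~\ref{l:lambdagamma}, we have $(p/\hat\gamma_L)^2 L^2/\lambda_L^2\geq 1$, so the above rate is majorised by the envelope $(p/\hat\gamma_L)^4 L^5/\lambda_L^3\,(1/\sqrt T + 1/\sqrt p)$ claimed in \eqref{e:HH2}. The main obstacle is precisely this refined Frobenius bound on $A$: only the direct computation of $\sum_t\|A_{mt}\|^2$, combined with a random-matrix spectral estimate on $\|U^\top U\|$ rather than the maximum-based bound of Lemma~\ref{l:ak11}, converts the $T^{1/4}$-type inflation in $R^{(1)}$ into the clean $(1/\sqrt T + 1/\sqrt p)$ envelope of the lemma.
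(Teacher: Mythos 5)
Your treatment of $\|HH^\top - I_L\|$ coincides with the paper's: the same decomposition starting from $\hat F^\top\hat F = TI_L$ into the three summands $\|H\|^2\|\hat\Sigma_F-I_L\|$, the cross term, and $T^{-1}\|A\|_F^2$; the same ingredients $\|H\|=O_P((p/\hat\gamma_L)L/\sqrt{\lambda_L})$ from Lemma~\ref{l:Hbound} and $\|T^{-1}F^\top F - I_L\|$ from Lemma~\ref{l:FFI}; and the same refined Frobenius bound $\|A\|_F = O_P\bigl(L(p/\hat\gamma_L)(1+\sqrt{T/p})\bigr)$, which is exactly what Lemma~\ref{l:ak12} supplies --- you are right that the crude estimate $\sqrt T\,R^{(1)}$ would not collapse into the $(1/\sqrt T + 1/\sqrt p)$ envelope, and the paper indeed invokes Lemma~\ref{l:ak12} rather than Lemma~\ref{l:ak11} at this point. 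Where you genuinely diverge is the passage from $HH^\top$ to $H^\top H$. The paper does \emph{not} use equality of the two norms: it sets $D_L = HH^\top - I_L$, inverts $HH^\top$ by a Neumann series on the event $\{\|D_L\|<1\}$, and writes $H^\top H - I_L = H^{-1}D_L H$, which costs an extra factor $\|H^{-1}\|\,\|H\| = O_P(\|H\|^2) = O_P\bigl((p/\hat\gamma_L)^2L^2/\lambda_L\bigr)$; this is precisely where the exponents $(p/\hat\gamma_L)^4L^5/\lambda_L^3$ in \eqref{e:HH2} come from, as opposed to the $(p/\hat\gamma_L)^2L^3/\lambda_L^2$ of the intermediate bound \eqref{e:HH1}. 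Your observation that for a square matrix the spectral norms of $H^\top H - I_L$ and $HH^\top - I_L$ coincide (both equal $\max_i|\sigma_i(H)^2-1|$ by the singular value decomposition) is correct and short-circuits this step entirely, delivering the sharper rate \eqref{e:HH1} directly; the stated envelope then follows a fortiori since $\lambda_L(\hat\gamma_L/p)^2=O_P(1)$.

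Two of your ingredient rates are too optimistic, though harmlessly so. The entries of $\hat\Sigma_F - I_L$ involve $f_{tk}=\langle X_t,\varphi_k\rangle/\sqrt{\lambda_k}$, whose fourth moments are not uniformly bounded in $k$ under Assumption~\ref{a:signal} alone (only a Gaussian-type kurtosis bound would give that), so Lemma~\ref{l:FFI} yields $O_P\bigl(L/(\lambda_L\sqrt T)\bigr)$ rather than your $O_P(L/\sqrt T)$; likewise $\|F\|_F^2=O_P(TL/\lambda_L)$ rather than $O_P(T)$. Substituting the correct rates still keeps every summand inside the envelope of \eqref{e:HH2}, so your conclusion stands.
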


\begin{proof}[\textbf{\upshape Proof of Theorem~\ref{thm}:}]
The proof is immediate from Lemmas~\ref{l:ak11}--\ref{l:H}. The assumption $p/\hat\gamma_L=O_P(1)$ implies that $R^{(1)}$ and $R^{(2)}$ are both $O_P\left(1/T^{1/4}+T^{1/4}/\sqrt{p} \right)$. The terms $R^{(3)}$, $R^{(5)}$ and $\|f_t\|$ are $O_P(1)$  and $R^{(6)}=O_P\left(
1/\sqrt{T}+1/\sqrt{p}
\right)$ . Now use \eqref{e:bt} and \eqref{e:buni}.
\end{proof}

\begin{proof}[\textbf{\upshape Proof of Theorem~\ref{thm2}:}]
We use again Lemmas~\ref{l:ak11}--\ref{l:H}. With our assumption $p/\hat\gamma_L=O_P(L^\beta)$ and the lower bound for the eigenvalues we get that $R^{(1)}=O_P\left(L^{\beta+1/2}(1/T^{1/4}+T^{1/4}/\sqrt{p}) \right)$ and $R^{(2)}=O_P\left(L^{\beta+1}(1/T^{1/4}+T^{1/4}/\sqrt{p}) \right)$. The term $R^{(3)}$ is bounded and by the Markov inequality $\|f_t\|=O_P(\sqrt{L})$. Finally, $R^{(5)}=O_P(L^{\beta+\nu/2+1})$ and $R^{(6)}=O_P\big(L^{4\beta+3\nu+5}(1/\sqrt{T}+1/\sqrt{p})\big)$. Now use \eqref{e:bt} for the non-uniform bound. Since we assume a subpolynomial rate for $L$, we see that the dominating term is $R^{(2)}R^{(5)}\|f_t\|$. For the uniform bound we use \eqref{e:buni}. Here the dominating term is $R^{(2)}R^{(5)}R^{(4)}$. A simple generalisation of Lemma~\ref{l:fs} yields that for any $q^\pr <q$ we have $R^{(4)}=o_P(L^{1/2+\nu/2}T^{1/q^\pr})$.
\end{proof}

\begin{proof}[\textbf{\upshape Proof of Corollary~\ref{c:1}:}]
We let $\hat X_t(s_i|K)$ be the estimated components in \eqref{e:pcaapproach} with $\hat E=(\hat e_1,\ldots, \hat e_K)$.  When $L$ is unknown our estimator is of the form $\hat X_t(s_i|\hat L)$. Moreover, $\hat X_t(s_i|L)=\hat X_t(s_i)$ as it was used in Theorems~\ref{thm} and \ref{thm2}. From those theorems we have statements of the form \newline
$\Pr\left(\max_{1\leq t\leq T}\max_{1\leq j\leq p}|\hat X_t(s_i| L)-X_t(s_i)|> a_T\right)\to 0.$
This remains true if we replace $L$ by $\hat L$, since
\begin{align*}
\Pr\left(\max_{1\leq t\leq T}\max_{1\leq j\leq p}|\hat X_t(s_i|\hat L)-X_t(s_i)|> a_T\right)
\leq \Pr\left(\max_{1\leq t\leq T}\max_{1\leq j\leq p}|\hat X_t(s_i|  L)-X_t(s_i)|> a_T\right) +\Pr\left(\hat L\neq L\right)\to 0.
\end{align*}
\end{proof}

\section{Conclusion}\label{s:conclusion}

In this paper we develop the theoretical foundation to a factor model based approach for preprocessing functional data. We show that the recovery of the underlying signal from noisy functional data observations can be done consistently using a simple and straight-forward application of a principal component factor model. We relax many assumptions on approximate factor models as well as comparable FDA smoothing techniques and provide a general and neat setting which appears realistic in real life applications. We extend common factor analysis approaches by considering the case where the number of factors may grow with the dimensions of the dataset. Furthermore, we expand upon comparable FDA smoothing techniques by including the whole dataset in the signal recovery instead of applying common curve-by-curve techniques. Practical applications as well as extensive simulation studies are presented in the companion paper \cite{hormann:jammoul:2020a}. 
\appendix

\section{Proofs of technical lemmas}
\label{a:proofs}

In this appendix we prove the lemmas stated in Section~\ref{ss:proofTH} and some further technical lemmas. We start by reviewing some further background and notation. 

All random variables below are defined on a common probability space $(\Omega, \mathcal{A},P)$. We note that by our assumption the processes $X_t$ are elements in $H=L^2([0,1])$---the space of square integrable functions on the interval $[0,1]$. This space is equipped with scalar product $\langle x, y\rangle=\int_0^1x(t)y(t)dt$ and norm $\|x\|=\sqrt{\langle x,x\rangle}$.  For the sake of a light notation we will use $\|\cdot\|$ also for the Euclidean norm and the spectral norm of a matrix. It will be clear from the context what type of norm it indicates. We use $\|\cdot\|_F$ for the Frobenius norm.

The signals $X_t$ can also be viewed as elements in $L^2_H=L^2_H(\Omega,\mathcal{A},P)$, i.e.\ the space of $H$-valued random elements $X$ with $E\|X\|^2<\infty$. The space $L^2_H$ is also a Hilbert space with inner product $E\langle X,Y\rangle$.

The outer product $\otimes$ between two functions $x,y\in H$ defines a Hilbert-Schmidt (HS) operator  through $x\otimes y(z)=x\langle z, y\rangle$. The set of Hilbert-Schmidt (HS) operators on $H$ will be denoted by $\mathcal{S}$. Let $(e_i)$ be an orthonormal basis (ONB) of $H$. Then the space $\mathcal{S}$ equipped with the scalar product $\langle W, V\rangle_\mathcal{S}:=\sum_{i\geq 1}\langle W(e_i), V(e_i)\rangle$ (which can be shown to be independent of the ONB) is another separable Hilbert space. It can be easily seen that $E\|X\|^4<\infty$, implies $E\|X\otimes X\|_\mathcal{S}^2<\infty$ and thus $X\otimes X\in L^2_\mathcal{S}(\Omega,\mathcal{A},P)$. Then we may define $\Omega^X:=\mathrm{Var}(X\otimes X)$. For the definition of covariance operators on Hilbert spaces we refer to \cite{bosq:2000}.

 We use $\stackrel{d}{\to}$ to indicate convergence in distribution. A normal random element in some separable Hilbert space $H$ with mean $\mu$ and covariance operator $\Omega$ is denoted by $\mathcal{N}_H(\mu,\Omega)$.

\subsection*{Preliminary lemmas}

\begin{lemma}\label{l:XtimesX} Define $W_T:=\sum_{t=1}^T X_t\otimes X_t$.  Under Assumption~\ref{a:signal}~(i) 
\begin{enumerate}[label=(\roman*), before=\itshape,font=\normalfont]
\item $\frac{1}{\sqrt{T}}\big(W_T-T\,\Gamma^X\big)\stackrel{d}{\to} \mathcal{N}_\mathcal{S}(0,\Omega^X),$
\item $E\big\|W_T-T\,\Gamma^X\big\|_\mathcal{S}^2\leq c_1 \, T$, where $c_1$ is independent of $T$.
\end{enumerate}
\end{lemma}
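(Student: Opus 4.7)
My plan treats (i) and (ii) in a unified way by passing to the centered sequence $Y_t := X_t\otimes X_t - \Gamma^X \in \mathcal{S}$, so that $W_T - T\Gamma^X = \sum_{t=1}^T Y_t$. Since $E\|X_t\|^4\leq C_X<\infty$, each $Y_t$ lies in $L^2_\mathcal{S}$, and $(Y_t)$ is a centered, strictly stationary sequence in the separable Hilbert space $\mathcal{S}$. Both claims then follow from the CLT and the second moment bound for $L^2$-$m$-approximable Hilbert-space sequences established in \cite{hoermann2010}, provided I can lift $L^4$-$m$-approximability of $(X_t)$ to $L^2_\mathcal{S}$-$m$-approximability of $(Y_t)$.

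The lifting is the main (and essentially only) step. I would take the natural coupled version $Y_t^{(m)} := X_t^{(m)}\otimes X_t^{(m)} - \Gamma^X$, use the identity
\begin{equation*}
Y_t - Y_t^{(m)} = (X_t - X_t^{(m)})\otimes X_t + X_t^{(m)}\otimes(X_t - X_t^{(m)}),
\end{equation*}
and apply the outer-product norm identity $\|x\otimes y\|_\mathcal{S} = \|x\|\|y\|$ together with the Cauchy--Schwarz inequality in $L^2(\Omega,\mathcal{A},P)$ to arrive at
\begin{equation*}
\bigl(E\|Y_t - Y_t^{(m)}\|_\mathcal{S}^2\bigr)^{1/2} \leq 2\, C_X^{1/4}\,\bigl(E\|X_t - X_t^{(m)}\|^4\bigr)^{1/4}.
\end{equation*}
Summability over $m$ of the right-hand side is exactly Assumption~\ref{a:signal}~(i), so $(Y_t)$ is $L^2_\mathcal{S}$-$m$-approximable.

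With this in hand, (i) is immediate from the CLT for centered $L^2$-$m$-approximable sequences in a separable Hilbert space, the limiting covariance operator $\Omega^X$ being the long-run covariance $\sum_{h\in\mathbb{Z}}\mathrm{Cov}(Y_0,Y_h)$ (which reduces to $\mathrm{Var}(X_0\otimes X_0)$ under independence). Part (ii) then follows from the elementary stationary identity
\begin{equation*}
E\Big\|\sum_{t=1}^T Y_t\Big\|_\mathcal{S}^2 = \sum_{s,t=1}^T E\langle Y_s, Y_t\rangle_\mathcal{S} \leq T\sum_{h\in\mathbb{Z}} \bigl|E\langle Y_0, Y_h\rangle_\mathcal{S}\bigr|,
\end{equation*}
the last sum being finite because $L^2_\mathcal{S}$-$m$-approximability implies summability of the scalar autocovariances via coupling and Cauchy--Schwarz in $L^2_\mathcal{S}$. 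Taking $c_1$ to be this sum closes the argument. The only conceptual obstacle is thus the algebraic coupling step for the quadratic functional; after that, both conclusions are direct invocations of the infrastructure developed in \cite{hoermann2010}.
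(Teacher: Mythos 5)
Your proof is correct and follows essentially the same route as the paper: lift $L^4$-$m$-approximability of $(X_t)$ to $L^2$-$m$-approximability of $(X_t\otimes X_t)$ in $\mathcal{S}$ via the bilinear coupling identity, then invoke the Hilbertian CLT and the second-moment bound for $m$-approximable sequences --- the paper merely outsources your explicit coupling computation to a ``simple modification of Lemma~2.1'' of \cite{hoermann2010} and part (ii) to its Theorem~3.1, whereas you rederive both. Your parenthetical remark that the limiting operator is the long-run covariance $\sum_{h\in\mathbb{Z}}\mathrm{Cov}(Y_0,Y_h)$, reducing to $\mathrm{Var}(X_0\otimes X_0)$ only under independence, is a fair and in fact slightly more careful reading than the paper's bare definition of $\Omega^X$.
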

\begin{proof}[\textbf{\upshape Proof:}]
Assumption~\ref{a:signal}~(i) implies that the process $(X_t\otimes X_t\colon t\geq 1)$ is $L^2$-$m$-approximable in $\mathcal{S}$. This follows by a simple modification of the proof of Lemma~2.1 in \cite{hoermann2010}. Thus Theorem~8 in \cite{hormann:cerovecki:2017} applies and this in turn yields the central limit theorem (see Theorem~5 in \cite{hormann:cerovecki:2017}). Part (ii) follows directly from Theorem~3.1 in \cite{hoermann2010}.
\end{proof}

\begin{lemma}\label{l:noisemoments} 
Under Assumption~\ref{a:noise} we have a constant $c_2$ which is independent of $p$ such that for $k\in \lbrace 1,2 \rbrace$
\begin{enumerate}[label=(\roman*), before=\itshape,font=\normalfont]
\item $E\big(u_2^\top u_1\big)^{2k}\leq c_2\, p^k$,
\item $E\|u_1\|^{4k}\leq c_2\, p^{2k}$.
\end{enumerate}
\end{lemma}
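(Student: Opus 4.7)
The plan is to exploit the Gaussianity of $u_t$ and reduce both inequalities to moment bounds for weighted sums of independent $\chi^2_1$ variables. Write $\Sigma_p$ for the $p\times p$ Toeplitz covariance matrix with entries $(\Sigma_p)_{ij}=\gamma^u(|i-j|)$, and factor $u_t=\Sigma_p^{1/2}Z_t$ where $Z_1,Z_2\sim\mathcal{N}(0,I_p)$ are independent. Rotating into the eigenbasis of $\Sigma_p$, with eigenvalues $\lambda_1,\ldots,\lambda_p\geq 0$, all relevant quadratic forms become linear combinations of independent squared standard normals.

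The first step is to record the bounds on $\Sigma_p$ provided by the summability assumption. Using Cauchy–Schwarz along each row one has $\|\Sigma_p\|_{\mathrm{op}}\leq\sum_{h\in\mathbb{Z}}|\gamma^u(h)|\leq C_u$; moreover $\mathrm{tr}(\Sigma_p)=p\,\gamma^u(0)\leq pC_u$ and $\mathrm{tr}(\Sigma_p^2)=\sum_{i,j}\gamma^u(|i-j|)^2\leq p\gamma^u(0)C_u\leq pC_u^2$. Iterating via $\mathrm{tr}(\Sigma_p^m)\leq\|\Sigma_p\|_{\mathrm{op}}^{m-2}\mathrm{tr}(\Sigma_p^2)$ for $m\geq 2$ yields $\mathrm{tr}(\Sigma_p^m)\leq pC_u^m$ for every $m\geq 1$.

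For part (ii) I would use Minkowski's inequality in $L^k(\Omega)$ applied to the decomposition $\|u_1\|^2=\sum_{j=1}^p\lambda_j Z_{1,(j)}^2$, giving
\[
\bigl(E\|u_1\|^{2k}\bigr)^{1/k}\leq\sum_{j=1}^p\lambda_j\bigl(EZ_{1,(j)}^{2k}\bigr)^{1/k}=\bigl((2k-1)!!\bigr)^{1/k}\mathrm{tr}(\Sigma_p)\leq\bigl((2k-1)!!\bigr)^{1/k}pC_u,
\]
so $E\|u_1\|^{4k}\leq\bigl((4k-1)!!\bigr)(pC_u)^{2k}$ for $k\in\{1,2\}$, which is (ii). For part (i), since $u_1$ and $u_2$ are independent Gaussians, conditional on $u_1$ we have $u_2^\top u_1\sim\mathcal{N}(0,u_1^\top\Sigma_p u_1)$, hence
\[
E\bigl[(u_2^\top u_1)^{2k}\bigm|u_1\bigr]=(2k-1)!!\,(u_1^\top\Sigma_p u_1)^k.
\]
The quadratic form $u_1^\top\Sigma_p u_1=\sum_j\lambda_j^2 Z_{1,(j)}^2$ is again a weighted sum of $\chi_1^2$'s, so the same Minkowski argument, now with weights $\lambda_j^2$, gives $E(u_1^\top\Sigma_p u_1)^k\leq(2k-1)!!(\mathrm{tr}(\Sigma_p^2))^k\leq(2k-1)!!(pC_u^2)^k$. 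Taking total expectation yields $E(u_2^\top u_1)^{2k}\leq\bigl((2k-1)!!\bigr)^2 p^k C_u^{2k}$ for $k\in\{1,2\}$, which is (i).

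There is no real obstacle here: once the Toeplitz spectral bound $\|\Sigma_p\|_{\mathrm{op}}\leq C_u$ and the trace estimates are in hand, everything else is a routine one-line application of Minkowski's inequality for Gaussian chaos of order two, first directly for (ii) and then after conditioning on $u_1$ for (i). The only mild subtlety worth stating explicitly is that all constants appearing are independent of $p$ because the bounds $pC_u$ and $pC_u^2$ scale linearly in $p$ but with $p$-free factors $C_u$ and $C_u^2$.
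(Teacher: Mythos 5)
Your proof is correct, and it takes a genuinely different route from the paper's. The paper expands $E(u_2^\top u_1)^{2k}=\sum_{i_1,\ldots,i_{2k}}(E\varepsilon_{i_1}\cdots\varepsilon_{i_{2k}})^2$ using independence of $u_1$ and $u_2$, then controls each product moment combinatorially via Isserlis' theorem (summing over pairings and using $\sum_{\ell,j}(\gamma^u(\ell-j))^2\leq C_u\gamma^u(0)p$); part (ii) is handled the same way. You instead diagonalize $\Sigma_p$, reduce everything to weighted sums of independent $\chi^2_1$ variables, and for (i) condition on $u_1$ so that $u_2^\top u_1$ becomes a centered Gaussian with variance $u_1^\top\Sigma_p u_1=\sum_j\lambda_j^2 Z_{1,(j)}^2$; Minkowski's inequality in $L^k$ together with the trace bounds $\mathrm{tr}(\Sigma_p)\leq pC_u$ and $\mathrm{tr}(\Sigma_p^2)\leq pC_u^2$ then finishes both parts. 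All the individual steps check out: the operator-norm bound $\|\Sigma_p\|_{\mathrm{op}}\leq\sum_h|\gamma^u(h)|$ is the standard Schur/row-column bound for a symmetric Toeplitz matrix, the conditional distribution $u_2^\top u_1\mid u_1\sim\mathcal{N}(0,u_1^\top\Sigma_p u_1)$ is exact, and Minkowski applies since the relevant exponents are at least $1$. What your approach buys is a cleaner argument that avoids the pairing combinatorics, extends verbatim to arbitrary integer $k$ with explicit constants $((2k-1)!!)^2C_u^{2k}$, and isolates exactly which functionals of $\Sigma_p$ matter (its operator norm and first two trace powers). What the paper's approach buys is that the Isserlis identity \eqref{e:isserlis} it sets up here is reused later (e.g.\ to bound the fourth mixed moments of $u_{1,i}$ in the proof of Lemma~\ref{l:ak11}), so the combinatorial machinery is not wasted; your route would require keeping that identity around separately for those later steps.
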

\begin{proof}[\textbf{\upshape Proof:}]
We show (i). The proof of (ii) is along the same lines. The main ingredient is Isserlis' theorem, which states that for a zero-mean multivariate normal vector $(Z_1,\ldots, Z_{2k})^\top$ the product moments are given by
\newcommand{\mfp}{\mathfrak{p}}
\newcommand{\mcP}{\mathcal{P}}
\begin{equation}\label{e:isserlis}
EZ_1 \cdots Z_{2k}=\sum_{\mfp\in \mcP[1,\ldots, 2k]} \prod_{\{i,j\}\in \mfp} \mathrm{Cov}(Z_i, Z_j),
\end{equation}
where $\mcP[I]$ denotes the set of all pairings of the components of some index vector $I$. Denote $u_1=(\varepsilon_1,\ldots, \varepsilon_p)^\top$.  Since $u_1$ and $u_2$ are i.i.d.\ we get
$
E\big(u_2^\top u_1\big)^{2k}=\sum_{i_1,\ldots, i_{2k}=1}^p(E\varepsilon_{i_1}\ldots \varepsilon_{i_{2k}})^2,
$
and thus
\begin{align*}
E\big(u_2^\top u_1\big)^{2k}&=\sum_{i_1,\ldots, i_{2k}=1}^p\left(\sum_{\mfp\in \mcP[(i_1,\ldots, i_{2k})]} \prod_{\{\ell,j\}\in \mfp} \gamma^u(\ell-j)\right)^2 \\
&\leq \sum_{i_1,\ldots, i_{2k}=1}^p\# \mcP[(i_1,\ldots, i_{2k})]\sum_{\mfp\in \mcP[(i_1,\ldots, i_{2k})]} \prod_{\{\ell,j\}\in \mfp} \big(\gamma^u(\ell-j)\big)^2\\
&=(\#\mcP[(1,\ldots, 2k)])^2\left(\sum_{\ell,j=1}^p\big(\gamma^u(\ell-j)\big)^2\right)^k \leq \big((2k-1)!!\big)^2\big(C_u \gamma^u(0)\big)^k p^k.
\end{align*}
The last inequality was deduced from Assumption~\ref{a:noise}.
\end{proof}

\begin{lemma}\label{l:FFI}
Under Assumption~\ref{a:signal}~(i)
$$
\left\|\frac{1}{T}F^\top F-I_L\right\|_F=O_P\left( \frac{L}{\lambda_L \sqrt{T}}\right).
$$
\end{lemma}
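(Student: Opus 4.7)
The plan is to bound the Frobenius norm entrywise by reducing each entry to an inner product in $\mathcal{S}$ and then invoking Lemma~\ref{l:XtimesX}(ii). First I would write down the $(k,\ell)$-entry of $T^{-1}F^\top F - I_L$ explicitly: since $f_t = (x_{t1}/\sqrt{\lambda_1},\ldots,x_{tL}/\sqrt{\lambda_L})^\top$ with $x_{tk} = \langle X_t,\varphi_k\rangle$ and, by orthonormality of the eigenfunctions of $\Gamma^X$, $Ex_{tk}x_{t\ell} = \lambda_k\delta_{k\ell}$, this entry equals
$$
M_{k\ell} := \frac{1}{T\sqrt{\lambda_k\lambda_\ell}}\sum_{t=1}^T\bigl(x_{tk}x_{t\ell}-\lambda_k\delta_{k\ell}\bigr).
$$

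The pivotal identification is that
$$
\sum_{t=1}^T \bigl(x_{tk}x_{t\ell}-\lambda_k\delta_{k\ell}\bigr) = \langle W_T - T\Gamma^X,\, \varphi_k\otimes\varphi_\ell\rangle_{\mathcal{S}},
$$
which follows from $\langle x\otimes x,\varphi_k\otimes\varphi_\ell\rangle_{\mathcal{S}} = \langle x,\varphi_k\rangle\langle x,\varphi_\ell\rangle$ applied to $x=X_t$, and from $\langle\Gamma^X,\varphi_k\otimes\varphi_\ell\rangle_{\mathcal{S}} = \lambda_k\delta_{k\ell}$. Applying Cauchy--Schwarz in $\mathcal{S}$ together with $\|\varphi_k\otimes\varphi_\ell\|_{\mathcal{S}}=1$, then summing the resulting bound over the $L^2$ entries and using the uniform estimate $1/(\lambda_k\lambda_\ell)\leq 1/\lambda_L^2$, gives
$$
\left\|\frac{1}{T}F^\top F - I_L\right\|_F^2 \leq \frac{L^2}{T^2\lambda_L^2}\bigl\|W_T - T\Gamma^X\bigr\|_{\mathcal{S}}^2.
$$

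Taking expectations and invoking Lemma~\ref{l:XtimesX}(ii) to bound $E\|W_T - T\Gamma^X\|_{\mathcal{S}}^2 \leq c_1 T$ yields $E\|T^{-1}F^\top F - I_L\|_F^2 \leq c_1 L^2/(T\lambda_L^2)$, and Markov's inequality then delivers the stated $O_P(L/(\lambda_L\sqrt{T}))$ rate. The only step of substance is the identification of the empirical second-moment sums with inner products in $\mathcal{S}$, which makes the full strength of Lemma~\ref{l:XtimesX} available; everything after that is a short routine calculation.
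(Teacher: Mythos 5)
Your proof is correct and follows essentially the same route as the paper: both identify the entries of $T^{-1}F^\top F-I_L$ with inner products against $W_T-T\Gamma^X$, apply Cauchy--Schwarz, and bound $\sum_{k,\ell}1/(\lambda_k\lambda_\ell)$ by $L^2/\lambda_L^2$. The only (immaterial) difference is that you conclude via the moment bound of Lemma~\ref{l:XtimesX}(ii) and Markov's inequality, whereas the paper invokes the CLT of part (i) to get $\|T^{-1/2}(W_T-T\Gamma^X)\|_{\mathcal{S}}=O_P(1)$.
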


\begin{proof}[\textbf{\upshape Proof:}]
Denoting by $\delta_{ij}$ the Kronecker delta, we notice that the $i,j$-th entry of the $(L\times L)$-matrix $T^{-1}F^\top F-I_L$ can be written as
$$
\frac{1}{T}\sum_{t=1}^T f_{it}f_{jt}-\delta_{ij}=\frac{1}{\sqrt{T}}\frac{1}{\sqrt{\lambda_i\lambda_j}}
\bigg\langle\frac{1}{\sqrt{T}}\sum_{t=1}^T (X_t\otimes X_t-\Gamma^X)(\varphi_i),\varphi_j\bigg\rangle.
$$
 This shows that
$$
\left\|\frac{1}{T}F^\top F-I_L\right\|_F\leq \frac{1}{\sqrt{T}}\left\|\frac{1}{\sqrt{T}}\big(W_T-T\,\Gamma^X\big)\right\|_\mathcal{S} \left(\sum_{i,j=1}^L\frac{1}{\lambda_i\lambda_j}\right)^{1/2}.
$$
By Lemma~\ref{l:XtimesX}~(i) we infer that $\|T^{-1/2}\big(W_T-T\,\Gamma^X\big)\|_\mathcal{S}=O_P(1)$ and hence the result follows.
\end{proof}

For the next result we note that the $k$-th row of $\hat f_{t} -H f_{t}$ is given by
\begin{equation}\label{e:atk}
\frac{1}{T}\frac{p}{\hat\gamma_k}\hat F_k\left(\frac{U^\top u_t}{p}+\frac{U^\top B f_t}{p}+\frac{FB^\top u_t}{p}\right)=:A_{1tk}+A_{2tk}+A_{3tk},
\end{equation}
where $\hat F_k$ is the $k$-th row of $\hat F^\top$.

\begin{lemma}\label{l:ak12}
Under Assumptions~\ref{a:noise} and \ref{a:signal} 
\begin{align}
\sum_{k=1}^L\frac{1}{T}\sum_{t=1}^T A_{1tk}^2&=O_P\left(L^{2}(\hat{\gamma}_L/p)^{-2}\big(1/T+1/p\big)\right),\label{e:ak121}\\
\sum_{k=1}^L\frac{1}{T}\sum_{t=1}^T A_{2tk}^2&=O_P\left(L^{2}(\hat{\gamma}_L/p)^{-2}/p\right),\label{e:ak122}\\
\sum_{k=1}^L\frac{1}{T}\sum_{t=1}^T A_{3tk}^2&=O_P\left(L^{2}(\hat{\gamma}_L/p)^{-2}/p\right).\label{e:ak123}
\end{align}
\end{lemma}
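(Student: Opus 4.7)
The natural reduction is that $\sum_{k=1}^L \frac{1}{T}\sum_{t=1}^T A_{mtk}^2 = \frac{1}{T}\mathrm{tr}(A_m A_m^\top)$, where $A_m$ is the $L \times T$ matrix whose $t$-th column is $A_{mt}$ in \eqref{e:at}. The plan is to estimate each of these three traces by direct matrix manipulation. Two facts will be used throughout: $\hat F \hat F^\top = T \hat E \hat E^\top$ is a rank-$L$ orthogonal projection scaled by $T$, and $(\hat\Lambda/p)^{-2}$ is diagonal with every entry at most $(p/\hat\gamma_L)^2$.

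For $m=1$, cyclic trace gives $\mathrm{tr}(A_1 A_1^\top) = (T^2 p^2)^{-1}\mathrm{tr}\bigl((\hat\Lambda/p)^{-2}\hat F^\top (U^\top U)^2 \hat F\bigr)$. Bounding $(\hat\Lambda/p)^{-2} \preceq (p/\hat\gamma_L)^2 I_L$, substituting $\hat F\hat F^\top = T\hat E\hat E^\top$, and using $\mathrm{tr}(\hat E^\top M \hat E) \leq L\|M\|_{op}$ (valid since $\hat E$ has $L$ orthonormal columns), this is at most $L(p/\hat\gamma_L)^2 \|U^\top U\|_{op}^2 / (Tp^2)$. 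The essential random input is $\|U^\top U\|_{op} = O_P(T+p)$, which follows from the Bai--Yin theorem applied to $U = \Sigma_u^{1/2}\tilde U$, where $\tilde U$ has i.i.d.\ standard Gaussian entries and $\|\Sigma_u^{1/2}\|_{op}$ is bounded by Assumption~\ref{a:noise}. Dividing by $T$ yields $O_P\bigl(L(\hat\gamma_L/p)^{-2}(1/T+1/p)^2\bigr)$, which is absorbed by the claim.

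The cases $m=2,3$ are analogous but need two further inputs: $\|F^\top F\|_{op} = O_P(T)$, from Lemma~\ref{l:FFI} via the triangle inequality, and $\|B^\top U\|_F^2 = O_P(Tp)$, from the direct second-moment computation $E\|B^\top U\|_F^2 = T\,\mathrm{tr}(B^\top \Sigma_u B) \leq T\|\Sigma_u\|_{op}\,\mathrm{tr}(B^\top B) = O(Tp)$, where $\mathrm{tr}(B^\top B) = O(p)$ follows from Assumption~\ref{a:pcs} and summability of the $\lambda_\ell$. For $m=2$, cyclic trace together with $\hat E\hat E^\top \preceq I_T$ reduces the problem to bounding $(Tp^2)^{-1}(p/\hat\gamma_L)^2\|F^\top F\|_{op}\|B^\top U\|_F^2$, of order $O_P((\hat\gamma_L/p)^{-2}/p)$. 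The case $m=3$ is structurally symmetric: introducing $Q = \hat E^\top F$ (with $\|Q\|_{op} = O_P(\sqrt{T})$ thanks to Lemma~\ref{l:FFI}) reduces the quadratic form to $\mathrm{tr}(QB^\top U U^\top B Q^\top) \leq \|Q\|_{op}^2\|B^\top U\|_F^2$, yielding the same rate.

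The main obstacle is the operator-norm bound $\|U^\top U\|_{op} = O_P(T+p)$ under correlated Gaussian noise, which is critical in the regime where $p$ grows with $T$; the Gaussianity of Assumption~\ref{a:noise} is exactly what permits the reduction to Bai--Yin, and without it one would need much stronger tail conditions on the noise. A secondary subtlety is choosing the sharpest trace inequality at each step --- using $\mathrm{tr}(\hat E^\top M \hat E) \leq L\|M\|_{op}$ when $\|M\|_{op}$ is well-controlled but the much tighter $\mathrm{tr}(\hat E^\top M \hat E) \leq \mathrm{tr}(M)$ when only $\mathrm{tr}(M)$ is available (this is how the Frobenius bound $\|B^\top U\|_F^2$ enters for $m=2,3$) --- so as not to pick up spurious factors of $L$ or $p$.
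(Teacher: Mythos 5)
Your argument is correct and reaches the stated bounds (in fact with slightly better powers of $L$), but it travels a genuinely different road from the paper. The paper never forms the traces $\mathrm{tr}(A_mA_m^\top)$: it works row by row, bounding $(\hat\gamma_k/p)^2\,T^{-1}\sum_t A_{mtk}^2$ by $T^{-2}\sum_{t,\ell}(u_\ell^\top u_t/p)^2$ (resp.\ $T^{-2}\sum_{t,\ell}(u_\ell^\top X_t(\bs)/p)^2$) via Cauchy--Schwarz and the normalization $\sum_\ell \hat f_{k\ell}^2=T$, and then applies Markov's inequality to these double sums using only the elementary moment bounds of Lemma~\ref{l:noisemoments} (Isserlis) and the independence of noise and signal; the $L^2$ arises from a union bound over $k$. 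Your route instead discards $\hat F$ through $\hat F\hat F^\top=T\hat E\hat E^\top$ and pushes everything onto the operator norm $\|U^\top U\|=O_P(T+p)$ (Bai--Yin/Johnstone, after writing $U=(\Gamma^u)^{1/2}Z$ with $\|\Gamma^u\|\le C_u$) and the Frobenius bound $\|B^\top U\|_F^2=O_P(Tp)$. The random-matrix input is legitimate here --- the paper uses exactly this bound on $\|ZZ^\top\|/T$ in the proof of Lemma~\ref{l:lambdagamma} --- and your method yields cleaner constants in $L$; the paper's method is more elementary and self-contained, requiring nothing beyond fourth-moment computations.

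Two small repairs are needed. First, $\|F^\top F\|=O_P(T)$ does not follow from Lemma~\ref{l:FFI} under the present lemma's stated hypotheses alone: that lemma gives $\|T^{-1}F^\top F-I_L\|_F=O_P\big(L/(\lambda_L\sqrt{T})\big)$, which need not be $O_P(1)$ without a side condition on $L$ and $\lambda_L$. Use instead $\|F^\top F\|\le\mathrm{tr}(F^\top F)$ with $E\,\mathrm{tr}(F^\top F)=TL$, which gives $O_P(TL)$; the extra factor $L$ is absorbed by the claimed $L^2$. Second, the lemma assumes only Assumptions~\ref{a:noise} and~\ref{a:signal}, not Assumption~\ref{a:pcs}; replace your appeal to the latter by $\mathrm{tr}(B^\top B)=\sum_{j=1}^p\mathrm{Var}(X_1(s_j))\le p\sqrt{C_X}$, which follows from Assumption~\ref{a:signal}~(ii) alone (cf.\ Lemma~\ref{l:bs}).
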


\begin{proof}[\textbf{\upshape Proof:}]
We show \eqref{e:ak121} and \eqref{e:ak122}. For \eqref{e:ak121} we use that
\begin{align*}
(\hat\gamma_k/p)^2\frac{1}{T}\sum_{t=1}^T A_{1tk}^2 = \frac{1}{T}\sum_{t=1}^T\frac{1}{T^2p^2}\left(\hat F_k U^\top u_t\right)^2 =\frac{1}{T}\sum_{t=1}^T\frac{1}{T^2p^2}\left(\sum_{\ell =1}^T\hat f_{k\ell} u_\ell^\top u_t\right)^2 \leq \frac{1}{T^2}\sum_{t=1}^T\sum_{\ell=1}^T\left(\frac{u_\ell^\top u_t}{p}\right)^2.
\end{align*}
In the last step we used the Cauchy-Schwarz inequality and the fact that $\hat f_{k\ell}$ ($\ell$-th column and $k$-th row of $\hat F^\top$) satisfies $\sum_{\ell=1}^T\hat f_{k\ell}^2=T$. 
Using the Markov inequality with Assumption~\ref{a:noise} and Lemma~\ref{l:noisemoments} we get for any $\kappa>0$ 
\begin{align*}
\Pr\left(\frac{1}{T^2}\sum_{t=1}^T\sum_{\ell=1}^T\left(\frac{u_\ell^\top u_t}{p}\right)^2>\kappa \big(1/T+1/p\big)\right) &\leq \Pr\left(\frac{1}{T}\sum_{t=1}^T\left(\frac{u_t^\top u_t}{p}\right)^2>\kappa\right)+\Pr\left(\frac{1}{T^2}\sum_{t=1}^T\sum_{\ell=1\atop \ell\neq t}^T\left(\frac{u_\ell^\top u_t}{p}\right)^2>\kappa /p\right)\\
&\leq E\left(\frac{u_1^\top u_1}{p}\right)^2\Big\slash\kappa+\frac{T-1}{T}E\left(\frac{u_1^\top u_2}{p}\right)^2\frac{p}{\kappa}\leq \frac{c_2\left(1+\frac{T-1}{T}\right)}{\kappa}.
\end{align*}

Thus 
\begin{align*}
\Pr\left(\sum_{k=1}^L\frac{1}{T}\sum_{t=1}^T A_{1tk}^2>L^2(\hat\gamma_L/p)^{-2}\big(1/T+1/p\big)\kappa\right) &\leq \Pr\left(\sum_{k=1}^L(\hat\gamma_k/p)^{2}\frac{1}{T}\sum_{t=1}^T A_{1tk}^2>L^2\big(1/T+1/p\big)\kappa\right)\\
&\leq \sum_{k=1}^L \Pr\left((\hat\gamma_k/p)^{2}\frac{1}{T}\sum_{t=1}^T A_{1tk}^2>L\big(1/T+1/p\big)\kappa\right)=O(1/\kappa).
\end{align*}

\noindent The first statement of the lemma is shown. By similar arguments as above we obtain, using $Bf_t = X_t(\bs)$, 
\begin{align*}
(\hat\gamma_k/p)^2\frac{1}{T}\sum_{t=1}^T A_{2tk}^2&\leq \frac{1}{T^2}\sum_{t=1}^T\sum_{\ell=1}^T\left(\frac{u_\ell^\top X_t(\bs)}{p}\right)^2.
\end{align*}
Then, using the independence between $X_t(\bs)$ and $u_\ell$ (Assumption~\ref{a:noise}), we get that
\begin{align*}
E\left(\frac{u_\ell^\top X_t(\bs)}{p}\right)^2&=\frac{1}{p^2}E\left(u_\ell^\top \Gamma^{X}(\bs,\bs) u_\ell\right)\nonumber = \frac{1}{p^2}\sum_{i=1}^p\sum_{j=1}^p \gamma^u(i-j)\Gamma^X(s_i,s_j)\nonumber \\
&\leq \frac{1}{p^2}\sum_{i=1}^p\sum_{j=1}^p |\gamma^u(i-j)|\max_{i,j}|\Gamma^X(s_i,s_j)|\leq C_u\frac{ \sup_{s\in [0,1]}EX^2(s)}{p}.
\end{align*}
By Assumption~\ref{a:signal} (ii) we have $\sup_{s\in [0,1]}EX^2(s)<\infty$ and so \eqref{e:ak122} follows with similar arguments as before using the Markov inequality. The bound for \eqref{e:ak123} can be derived in complete analogy to \eqref{e:ak122}.
\end{proof}

\begin{lemma}\label{l:Bj2}
Under Assumptions~\ref{a:noise} and \ref{a:signal} and if  $L/(T\lambda_L^2)\to 0$ we have 
$$
\max_{j\leq p}\left\|\frac{1}{T}F^\top U_j \right\|=O_P\left(\sqrt{L}\big(\log(pL)/T\big)^{1/2}\right).
$$
\end{lemma}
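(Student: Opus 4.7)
The plan is to exploit the Gaussianity of the noise given in Assumption~\ref{a:noise} and combine a conditional Gaussian tail bound with a union bound over the $pL$ scalar coordinates. First I rewrite $T^{-1}F^\top U_j$ as an $L$-vector with $k$-th entry
\[
S_{jk} := \frac{1}{T}\sum_{t=1}^T f_{tk}\,u_{tj},
\]
so that
\[
\max_{j\leq p}\left\|\frac{1}{T}F^\top U_j\right\|\;\leq\; \sqrt{L}\,\max_{1\leq j\leq p,\,1\leq k\leq L}|S_{jk}|.
\]
It therefore suffices to prove $\max_{j,k}|S_{jk}|=O_P(\sqrt{\log(pL)/T})$; the prefactor $\sqrt{L}$ will supply the $\sqrt{L}$ in the target rate.

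By Assumption~\ref{a:noise}, $(u_t)$ is independent of $(X_t)$, the vectors $u_t$ are jointly Gaussian, and they are i.i.d.\ across $t$. Consequently, conditional on $F$, the scalar $S_{jk}$ is centered Gaussian with variance
\[
\sigma_{jk,F}^2 \;=\; \frac{\gamma^u(0)}{T^2}\sum_{t=1}^T f_{tk}^2.
\]
Introduce the event $\mathcal{E}_T:=\{\max_{k\leq L}T^{-1}\sum_{t=1}^T f_{tk}^2\leq 2\}$. On $\mathcal{E}_T$ the standard Gaussian tail bound gives
\[
\Pr(|S_{jk}|>\eta \mid F)\,\mathbf{1}_{\mathcal{E}_T} \;\leq\; 2\exp\!\left(-\frac{\eta^2 T}{4\gamma^u(0)}\right),
\]
so a union bound over the $pL$ pairs $(j,k)$ together with the choice $\eta=c\sqrt{\log(pL)/T}$ for a sufficiently large constant $c$ makes the resulting conditional probability tend to zero.

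The remaining step, and the main technical point, is to show $\Pr(\mathcal{E}_T)\to 1$; this is where the hypothesis $L/(T\lambda_L^2)\to 0$ enters. Writing
\[
\frac{1}{T}\sum_{t=1}^T f_{tk}^2-1 \;=\; \frac{1}{\lambda_k}\left\langle \frac{1}{T}(W_T-T\Gamma^X)(\varphi_k),\varphi_k\right\rangle
\]
and bounding via the operator norm (which is dominated by the Hilbert--Schmidt norm) of $W_T-T\Gamma^X$, I obtain
\[
\max_{k\leq L}\left|\frac{1}{T}\sum_{t=1}^T f_{tk}^2-1\right| \;\leq\; \frac{1}{\lambda_L T}\,\|W_T-T\Gamma^X\|_\mathcal{S},
\]
which is $O_P(1/(\lambda_L\sqrt{T}))$ by Lemma~\ref{l:XtimesX}(ii). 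The stated hypothesis in particular forces $\lambda_L^2 T\to\infty$, so this is $o_P(1)$ and hence $\Pr(\mathcal{E}_T)\to 1$. Combining this with the conditional Gaussian argument yields the claimed rate $O_P(\sqrt{L\log(pL)/T})$.
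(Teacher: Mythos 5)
Your proposal is correct and follows essentially the same route as the paper: reduce to a maximum over the $pL$ scalar coordinates, condition on the signal to exploit Gaussianity of the noise, apply a Gaussian tail bound plus a union bound with threshold of order $\sqrt{\log(pL)/T}$, and control the empirical second moments of the scores via Lemma~\ref{l:XtimesX} (your single operator-norm bound for $\Pr(\mathcal{E}_T^c)$ is a harmless variant of the paper's coordinatewise Markov bound, and your conditional variance is in fact the cleaner computation). No gaps.
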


\begin{proof}[\textbf{\upshape Proof:}]
We have
$$
\max_{j\leq p}\left\|\frac{1}{T}F^\top U_j \right\|=\max_{j\leq p}\left\|\frac{1}{T}\sum_{t=1}^Tf_tu_{tj}\right\|\leq \sqrt{L}\max_{j\leq p \atop i\leq L} \left|\frac{1}{T}\sum_{t=1}^T\frac{1}{\sqrt{\lambda_i}}\langle X_t,\varphi_i\rangle u_{tj}\right|.
$$
Then, we note that for any $\kappa>0$
\begin{align*}
& \Pr\left(\max_{j\leq p \atop i\leq L} \left|\frac{1}{T}\sum_{t=1}^T\frac{1}{\sqrt{\lambda_i}}\langle X_t,\varphi_i\rangle u_{tj}\right|>\kappa\right)\\
&\quad \leq \Pr\left(\max_{j\leq p \atop i\leq L} \left|\frac{1}{T}\sum_{t=1}^T\frac{1}{\sqrt{\lambda_i}}\langle X_t,\varphi_i\rangle u_{tj}>\kappa\right|\cap \max_{k\leq L} \frac{1}{T}\sum_{t=1}^T\frac{1}{ \lambda_k}\langle X_t,\varphi_k\rangle^2\leq 2\right)
+ \Pr\left(\max_{k\leq L}\frac{1}{T}\sum_{t=1}^T\frac{1}{ \lambda_k}\langle X_t,\varphi_k\rangle^2> 2\right) \\
& \quad=: p_{1}(\kappa)+p_{2}.
\end{align*}
For $p_2$ we get with Lemma~\ref{l:XtimesX}~(ii) and the Markov inequality
\begin{align*}
p_2&\leq \sum_{k=1}^L \Pr\left(\bigg\langle\Big(\sum_{t=1}^T  \Big(X_t\otimes X_t-EX_t\otimes X_t \Big)\Big)(\varphi_k),\varphi_k\bigg\rangle> \lambda_k T\right) \\
&\leq \sum_{k=1}^L \Pr\left(\bigg\|\sum_{t=1}^T  \Big(X_t\otimes X_t-EX_t\otimes X_t \Big)\bigg\|_\mathcal{S}> \lambda_k T\right)\\
&\leq \sum_{k=1}^L E\bigg\|\sum_{t=1}^T  W_T-T\Gamma^X \bigg\|_\mathcal{S}^2\bigg/\Big(\lambda_k^2 T^2 \Big)\leq \frac{c_1L}{\lambda_L^2 T}.
\end{align*}
To derive the probability $p_1(\kappa)$ we first condition on $X_1,\ldots,X_T$. Then
\begin{align*}
&\Pr\left(
\max_{j\leq p \atop i\leq L} 
\Big|
\frac{1}{T}\sum_{t=1}^T\frac{1}{\sqrt{\lambda_i}}\langle X_t,\varphi_i\rangle u_{tj}\Big|>\kappa
\cap \max_{k\leq L} \frac{1}{T}\sum_{t=1}^T\frac{1}{ \lambda_k}\langle X_t,\varphi_k\rangle^2\leq 2
\bigg| X_1,\ldots, X_T
\right)\\
&\quad
\leq \sum_{j\leq p \atop i\leq L} \Pr\left(\Big|
\frac{1}{T}\sum_{t=1}^T\frac{1}{\sqrt{\lambda_i}}\langle X_t,\varphi_i\rangle u_{tj}\Big|>\kappa
\cap \max_{k\leq L} \frac{1}{T}\sum_{t=1}^T\frac{1}{ \lambda_k}\langle X_t,\varphi_k\rangle^2\leq 2
\bigg| X_1,\ldots, X_T
\right).
\end{align*}
Conditional on $X_1,\ldots, X_T$ the variable
$$
\frac{1}{T}\sum_{t=1}^T\frac{1}{\sqrt{\lambda_i}}\langle X_t,\varphi_i\rangle u_{tj}\sim N\left(0,\frac{\big(\gamma^u(0)\big)^2}{T}\times \gamma_i^2(X_1,\ldots,X_T)\right),
$$
where $\gamma_i^2(X_1,\ldots,X_T)=T^{-1}\sum_{t=1}^T \lambda_i^{-1}\langle X_t,\varphi_i\rangle^2$. By an elementary bound for the normal distribution function (see e.g., \citet[p.\ 177]{petrov}) we get
$$
\Pr(|N(0,\sigma^2)|>x)<\frac{\sigma}{x}e^{-\frac{x^2}{2\sigma^2}}, \quad x>0.
$$
Thus for any $i$ and $j$ 
$$
\Pr\left(\Big|
\frac{1}{T}\sum_{t=1}^T\frac{1}{\sqrt{\lambda_i}}\langle X_t,\varphi_i\rangle u_{tj}\Big|>\kappa
\cap \max_{k\leq L} \frac{1}{T}\sum_{t=1}^T\frac{1}{ \lambda_k}\langle X_t,\varphi_k\rangle^2\leq 2
\Bigg| X_1,\ldots, X_T
\right)\leq \frac{\sqrt{2}\gamma^u(0)}{\kappa\sqrt{T}}e^{-\frac{T\kappa^2}{4(\gamma^u(0))^2}}.
$$
By taking the expectation of the conditional probability we obtain a bound for the unconditional probability. Then,  summing over $i$ and $j$ yields
$$
p_1(\kappa)\leq \frac{\sqrt{2}\gamma^u(0)\times Lp}{\kappa\sqrt{T}}e^{-\frac{T\kappa^2}{4(\gamma^u(0))^2}}.
$$
Now the proposition of the lemma follows immediately.
\end{proof}

\begin{proof}[\textbf{\upshape Proof of Lemma~\ref{l:lambdagamma}:}]
Without loss of generality we will assume that the sampling points are $\{s_j=j/p\colon 0\leq j\leq p-1\}$.
By the triangular inequality we have
\begin{equation}\label{e:evbound}
|\lambda_i-\hat\gamma_i/p|\leq |\lambda_i-\hat\lambda_i| +|\hat\lambda_i-\hat\lambda_i^*|+|\hat\lambda_i^*-\hat\gamma^X_i/p|+|\hat\gamma^X_i/p-\hat\gamma_i/p|,
\end{equation}
where the terms on the right hand side will now be defined and bounded. 
All the variables appearing in \eqref{e:evbound} are eigenvalues of some covariance matrix or some covariance operator. We consider the covariance matrices 
$$\hat\Sigma^y=\frac{1}{T}YY^\top=\frac{1}{T}\sum_{t=1}^T y_ty_t^\top\quad\text{and}\quad\hat\Sigma^X:=\frac{1}{T}XX^\top=\frac{1}{T}\sum_{t=1}^T X_t(\bs)X_t(\bs)^\top,$$ with eigenvalues $(\hat\gamma_i)$ and $(\hat\gamma^X_i)$, respectively. Thereby we note that the non-zero eigenvalues of $T^{-1}Y^\top Y$  and $\hat\Sigma^y$ coincide. Moreover, we consider the covariance operators $\Gamma^X$, $\hat\Gamma^X$ and $\hat\Gamma^{X^*}$, with eigenvalues $(\lambda_i)$, $(\hat\lambda_i)$ and $(\hat\lambda_i^*)$, respectively. Here $\hat\Gamma^X$ is the empirical covariance operator of the fully observed functional data $X_1(v),\ldots, X_T(v)$ and  $\hat\Gamma^{X^*}$ is the empirical covariance operator of the discretised data $X_t^*(v):=X_t(s_j)$ for $v\in [s_j,s_{j+1})$. Note that $\hat\Gamma^{X^*}(s_k,s_j)=\hat\Gamma^{X}(s_k,s_j)=\hat\Sigma^X_{kj}$.

We start with the first term on the right in \eqref{e:evbound}. It follows from Weyl's inequality that $|\lambda_i-\hat\lambda_i| \leq \|\Gamma^X-\hat\Gamma^X\|$. Hence, by $L^4$-$m$-approximability and Theorem~3.1 in 
\cite{hoermann2010} we get
\begin{equation}\label{e:evb1}
 |\lambda_i-\hat\lambda_i| \leq \|\Gamma^X-\hat\Gamma^X\|=O_P\left(\frac{1}{\sqrt{T}}\right).
\end{equation}
Again with Weyl's inequality and then the Cauchy-Schwarz inequality we obtain
\begin{align*}
 |\hat\lambda_i-\hat\lambda_i^*|^2 &\leq \int\int \left(\frac{1}{T}\sum_{t=1}^T\big(X_t(r)X_t(s)-X^*_t(r)X^*_t(s)\big)\right)^2dsdr \leq \frac{2}{T} \sum_{t=1}^T\left(\big\| X_t\big\|^2+\big\| X_t^*\big\|^2\right)\frac{1}{T}\sum_{t=1}^T\big\|X_t-X^*_t\big\|^2 \\
  &=O_P(1) \sum_{j=0}^{p-1} \int_0^{1/p} \big(X_t(j/p+h)-X_t(j/p)\big)^2dh.
\end{align*}
With \eqref{e:inc} it is then immediate that $|\hat\lambda_i-\hat\lambda_i^*|=O_P\left(1/\sqrt{p}\right)$.

In the next step one needs to study the relationship between the eigenvalues of an operator and a matrix. Henceforth we write $[a]_k$ for the $k$-th component of some vector $a$.  Let $\hat e_i^X$ be the eigenvector associated to $\hat\gamma_i^X$ and let $\hat\phi_i(s)=[\hat e_i^X]_k$ for $s\in[(k-1)/p,k/p)$, $k\in \{1,\ldots,p\}$. Then
\begin{align*}
\hat\gamma_i^X [\hat e_i^X]_k=[\hat\Sigma^X\hat e_i^X]_k&=\sum_{j=1}^p \hat\Gamma^{X}(s_k,s_j)[e_i^X]_j=\sum_{j=1}^p p\int_{s_j}^{s_{j+1}}\hat\Gamma^{X^*}(s_k,s) \hat\phi_i(s)ds.
\end{align*}
Thus, for any $v\in [0,1]$
$$
\int_{0}^{1}\hat\Gamma^{X^*}(v,s) \hat\phi_i(s)ds=\frac{\hat\gamma_i^X}{p}\hat\phi_i(v),
$$
showing that $\hat\gamma_i^X/p=\hat\lambda_i^{*}$.

Finally, 
\begin{align*}
|\hat\gamma_i-\hat\gamma^X_i|&\leq\|\hat\Sigma^y-\hat\Sigma^X\|=\frac{1}{T}\left\|UU^\top+XU^\top+
UX^\top\right\|\leq
 \frac{1}{T}\left\|UU^\top\right\|+\frac{2}{T}\left\|XU^\top\right\|.
\end{align*}
Denote $\Gamma^u=\mathrm{Var}(u_t)$ and $Z=[Z_1,\ldots, Z_T]$ with $Z_t\stackrel{\text{iid}}{\sim} N_p(0,I_p)$. Then
$$ \frac{1}{T}\left\|UU^\top\right\|\stackrel{d}{=}  \frac{1}{T}\left\|(\Gamma^u)^{1/2} ZZ^\top(\Gamma^u)^{1/2}\right\|\leq\|\Gamma^u\|\frac{1}{T}\left\| ZZ^\top\right\|.$$
By a basic inequality for matrix norms, we have $\|\Gamma^u\|\leq \sqrt{\|\Gamma^u\|_{\text{c}}\|\Gamma^u\|_\text{r}}$, where $\|A\|_{\text{c}}$ and $\|A\|_{\text{r}}$ denote the maximal absolute column sum and the maximal absolute row sum of some matrix $A$, respectively. In particular, this implies that $\|\Gamma^u\|\leq \sum_{h\in\mathbb{Z}}|\gamma^u(h)|\leq C_u$ (Assumption~\ref{a:noise}). By results on the distribution of the largest eigenvalue of random covariance matrices in \cite{johnstone:2001} and \cite{karoui:2005}, one can deduce that
$$\frac{1}{T}\left\| ZZ^\top\right\|= \begin{cases}
O_P(1),& \text{  } p/T \rightarrow \gamma\in [0,\infty),\\
O_P(p/T),& \text{  } p/T \rightarrow \infty.
\end{cases}$$
 Next we observe that 
\begin{align*}
\frac{1}{T}\left\|XU^\top\right\| \leq\frac{1}{T}\|X\|\|U^\top\| \stackrel{d}{=} \frac{1}{T}\|X\| \|Z^\top(\Gamma^u)^{1/2}\| \leq\left(\frac{1}{T}\|XX^\top\| \|\Gamma^u\| \frac{1}{T}\|ZZ^\top\|\right)^{1/2}.
\end{align*}
By our previous arguments we see that this last term is $O_P\left(T^{-1}\|XX^\top\| \max\{p/T,1\}\right)^{1/2}$. Moreover,
$$
\frac{1}{T}\|XX^\top\|\leq \mathrm{tr}\left(\hat\Sigma^X\right)=\frac{1}{T}\sum_{t=1}^T\sum_{i=1}^p X_t^2(s_i),
$$
and hence $T^{-1}\|XX^\top\|=O_P(p)$.
Thus, we can conclude
$$
|\hat\gamma_i/p-\hat\gamma^X_i/p|= \begin{cases}
O_P\left(\frac{1}{\sqrt{p}}\right), &\text{ } p/T \rightarrow \gamma\in [0,\infty);\\
O_P\left(\frac{1}{\sqrt{T}}\right), &\text{ } p/T \rightarrow \infty.
\end{cases}
$$
The proof follows.
\end{proof}

\begin{proof}[\textbf{\upshape Proof of Lemma~\ref{l:fs}:}]
It holds by the Bonferroni and the Markov inequality that for any $\epsilon >0$
$$
\Pr\left(\max_{1\leq t\leq T}\|f_t\|>\epsilon T^{1/q^\prime}\right)\leq \sum_{t=1}^T \max_{1\leq t\leq T}\frac{E\|f_t\|^q}{T^{q/q^\prime}\epsilon^q}.
$$
By the stationarity of $(X_t)$ and $\langle X_1, \varphi_i\rangle^2\leq \|X_1\|^2$ we have 
\begin{align*}
E\|f_t\|^{q}=E\left(\sum_{i=1}^L \frac{\langle X_1, \varphi_i\rangle^2}{\lambda_i}\right)^{q/2}\leq E\|X_1\|^{q}\left(L/\lambda_L\right)^{q/2}.
\end{align*}
The statement (i) follows. For statement (ii) we use
$$\Pr(\max_{1\leq t\leq T} \|f_t\| >\epsilon \log T)\leq \sum_{t=1}^T \Pr( \exp(\alpha \|f_t\|) > T^{\alpha\epsilon}).$$
Now using again the Markov inequality and choosing $\epsilon>1/\alpha$, this last term tends to zero.
\end{proof}

\begin{proof}[\textbf{\upshape Proof of Lemma~\ref{l:evb}:}] 
We are going to show that 
\begin{equation}\label{e:stp}
\Pr \left(\left|\frac{1}{\lambda_L}-\frac{p}{\hat\gamma_L}\right|>\frac{1}{\lambda_L}\right)\leq 2 \Pr\left(\left|\lambda_L-\frac{\hat\gamma_L}{p}\right|>\frac{\lambda_L}{2}\right).
\end{equation}
Then, by the lower bound on the eigenvalues we conclude that the right-hand term in \eqref{e:stp} is bounded by \\
 $2 \Pr\left(\left|\lambda_L- \hat\gamma_L/p\right|>\rho L^{-\nu}/2\right).$ Since we assume that Lemma~\ref{l:lambdagamma} applies, and since we assume $L^{2\nu}/\max\{p,T\}\to 0$, this term goes to zero.

 As for \eqref{e:stp} we remark that for a random variable $Y>0$ and some $x>0$, it holds that
\begin{align*}
\Pr\left(\left|\frac{1}{x}-\frac{1}{Y}\right|>\frac{1}{x}\right)
= \Pr\left(|Y-x|>Y\right) \leq \Pr\left(|Y-x|>x/2\right)+\Pr(Y\leq x/2),
\end{align*}
while $\Pr(Y\leq x/2)=\Pr(x-Y\geq x/2)\leq \Pr(|Y-x|\geq x/2)$.

\end{proof}

\begin{proof}[\textbf{ \upshape Proof of Lemma~\ref{l:ak11}:}]
Recall that 
$$
A_{1t}=\frac{1}{T}(\hat\Lambda/p)^{-1}\hat F^\top\frac{U^\top u_t}{p}
=\frac{1}{T}(\hat\Lambda/p)^{-1}\sum_{\ell=1}^T\frac{1}{p}\hat f_\ell u_\ell^\top u_t
$$
and  that $\sum_{\ell=1}^T\|\hat f_\ell\|^2=TL$.
 Thus, by the triangular inequality and the Cauchy-Schwarz inequality it follows that
$$
\|A_{1t}\|\leq \frac{p}{\hat\gamma_L}\frac{1}{T}\left\|\sum_{\ell=1}^T\frac{1}{p}\hat f_\ell u_\ell^\top u_t\right\|\leq 
\frac{p}{\hat\gamma_L} \frac{\sqrt{TL}}{T}\sqrt{\sum_{\ell=1}^T\frac{(u_\ell^\top u_t)^2}{p^2}}.
$$
Hence
\begin{align*}
\frac{\hat\gamma_L}{p}\max_{1\leq t\leq T}\|A_{1t}\|\leq\sqrt{L/T}\left(
\max_{1\leq t\leq T}\|u_t\|^2/p+
\max_{1\leq t\leq T}\sqrt{\sum_{\ell=1\atop \ell\neq t}^T\frac{(u_\ell^\top u_t)^2}{p^2}}\right) =:\sqrt{L/T}\left(\max_{1\leq t\leq T}\|A_{1t}^*\|+\max_{1\leq t\leq T}\|A_{1t}^{**}\|.
\right)
\end{align*}
The Markov inequality yields
\begin{align*}
&P\left(\sqrt{L/T}\max_{1\leq t\leq T}\|A_{1t}^{*}\|>\kappa\right)\leq \kappa^{-4}\frac{L^2}{T^2p^4}E\left[\max_{1\leq t\leq T} \|u_t\|^8\right]\leq  \kappa^{-4}\frac{L^2}{Tp^4}E\left[\|u_1\|^8\right].
\end{align*}
From Lemma~\ref{l:noisemoments}~(ii) we infer that 
$$\sqrt{L/T}\max_{1\leq t\leq T}\|A_{1t}^{*}\|=O_P\Big(\sqrt{L} /T^{1/4}\Big).$$
Yet another application of the Markov inequality and Lemma~\ref{l:noisemoments} gives
\begin{align*}
&\Pr\left(\sqrt{L/T}\max_{1\leq t\leq T}\|A_{1t}^{**}\|>\kappa\right)\leq \kappa^{-4}\frac{L^2}{T^2p^4}E\left[\max_{1\leq t\leq T} \left(\sum_{\ell=1\atop \ell\neq t}^T (u_\ell^\top u_t)^2\right)^2\right]\\
&\qquad 
\leq \kappa^{-4}\frac{L^2}{Tp^4}E\left[\left(\sum_{\ell=2}^T (u_\ell^\top u_1)^2\right)^2\right]
\leq \kappa^{-4}\frac{(T-1)L^2}{Tp^4}E\left[\sum_{\ell=2}^T (u_\ell^\top u_1)^4\right]
\leq \kappa^{-4}\frac{TL^2}{p^4}E\left[(u_2^\top u_1)^4\right]\leq c_2 \kappa^{-4}\frac{TL^2}{p^2}.
\end{align*}
Hence
$$\sqrt{L/T}\max_{1\leq t\leq T}\|A_{1t}^{**}\|=O_P\Big(\sqrt{L}\frac{T^{1/4}}{\sqrt{p}} \Big).$$
Combining these estimates yields \eqref{e:ak111}.

Next we verify \eqref{e:ak112}. We recall that $Bf_t=X_t(\bs)$ and hence
$$
A_{2t}=\frac{1}{T}(\hat\Lambda/p)^{-1}\hat F^\top\frac{U^\top B f_t}{p}=
\frac{1}{T}(\hat\Lambda/p)^{-1}\frac{\sum_{\ell=1}^T\hat f_\ell u_\ell^\top X_t(\bs)}{p}.
$$
Thus
$$
\frac{\hat\gamma_L}{p}\max_{1\leq t\leq T}\|A_{2t}\|\leq\frac{1}{T}
\max_{1\leq t\leq T}\left\|\sum_{\ell=1}^T\hat f_\ell\frac{ u_\ell^\top X_t(\bs)}{p}\right\|\leq
\sqrt{\frac{L}{T}
\frac{1}{p^2}
\max_{1\leq t\leq T}\sum_{\ell=1}^T (u_\ell^\top X_t(\bs))^2
}.
$$
Consequently, by the Markov inequality 
\begin{align*}
&\Pr\left(\frac{\hat\gamma_L}{p}\max_{1\leq t\leq T}\|A_{2t}\|>\kappa\right)\leq \kappa^{-4}\frac{L^2}{T^2p^4}E\left[\max_{1\leq t\leq T}\left(\sum_{\ell=1}^T (u_\ell^\top X_t(\bs))^2\right)^2\right].
\end{align*}
Moreover,
\begin{align*}
&E\max_{1\leq t\leq T}\left(\sum_{\ell=1}^T (u_\ell^\top X_t(\bs))^2\right)^2\leq T \max_{1\leq t\leq T} E
\left(\sum_{\ell=1}^T (u_\ell^\top X_t(\bs))^2\right)^2 = T \max_{1\leq t\leq T} E
\left(\sum_{\ell=1}^T\sum_{j=1}^T (u_\ell^\top X_t(\bs) X_t^\top(\bs)u_j)^2\right)\\
&\qquad = T(T^2-T) \sum_{i_1=1}^p\sum_{i_2=1}^p\sum_{j_1=1}^p\sum_{j_2=1}^p\gamma^u(i_2-i_1)\gamma^u(j_2-j_1)EX_1(s_{i_1})X_1(s_{i_2})X_1(s_{j_1})X_1(s_{j_2})\\
&\qquad \quad+T^2\sum_{i_1=1}^p\sum_{i_2=1}^p\sum_{j_1=1}^p\sum_{j_2=1}^p Eu_{1,i_1}u_{1,i_2}u_{1,j_1}u_{1,j_2}EX_1(s_{i_1})X_1(s_{i_2})X_1(s_{j_1})X_1(s_{j_2}).
\end{align*}
By \eqref{e:isserlis}
$$
Eu_{1,i_1}u_{1,i_2}u_{1,j_1}u_{1,j_2}=\gamma^u(i_1-i_2)\gamma^u(j_1-j_2)+\gamma^u(i_1-j_1)\gamma^u(i_2-j_2)+\gamma^u(i_1-j_2)\gamma^u(i_2-j_1).
$$
The Cauchy-Schwarz inequality applied twice gives
$$
|EX_1(s_{i_1})X_1(s_{i_2})X_1(s_{j_1})X_1(s_{j_2})|\leq \left(EX_1^4(s_{i_1})EX_1^4(s_{i_2})EX_1^4(s_{j_1})EX_1^4(s_{j_2})\right)^{1/4}.
$$
By Assumption~\ref{a:signal}~(ii) this term is bounded by $C_X$. Combing these results with Assumption~\ref{a:noise} it is easy to derive that 
$$
E\max_{1\leq t\leq T}\left(\sum_{\ell=1\atop \ell\neq t}^T (u_\ell^\top X_t(\bs))^2\right)^2\leq C_X\times  C_u^2\times \big(T^3+2T^2)p^2.
$$
Thus
$$
\frac{\hat\gamma_L}{p}\max_{1\leq t\leq T}\|A_{2t}\|=O_P\Big(\sqrt{L}\frac{T^{1/4}}{\sqrt{p}} \Big).
$$
This yields \eqref{e:ak112}. Relation \eqref{e:ak113} can be derived analogously. 
\end{proof}

\begin{proof}[\textbf{ \upshape Proof of Lemma~\ref{l:bj}:}]
We have
\begin{align*}
\|B_{1j}\|=\left\|\frac{1}{T}\sum_{t=1}^T y_{tj}\left(\hat f_t^\top-H f_t^\top\right)\right\| \leq \left(
\frac{1}{T}
\sum_{t=1}^T y_{tj}^2\frac{1}{T}\sum_{t=1}^T \left\|\hat f_t^\top-H f_t^\top\right\|^2
\right)^{1/2},
\end{align*}
where $y_{tj}=X_t(s_j)+u_{tj}$.
Lemma~\ref{l:ak12} implies $T^{-1}\sum_{t=1}^T \big\|\hat f_t^\top-H f_t^\top\big\|^2=O_P\big(L^{2}(\hat\gamma_L/p)^{-2}(1/T+1/p)\big)$. Moreover, 
$$
\frac{1}{T}\sum_{t=1}^T y_{tj}^2\leq 2\sup_{s\in [0,1]} \frac{1}{T}\sum_{t=1}^T X_t^2(s)+2\max_{k=1,\ldots, p} \frac{1}{T}\sum_{t=1}^T u_{tk}^2.
$$
The required $L^4$-$m$-approximability implies that the sequence $(\sup_{s\in [0,1]}X_t^2(s)\colon t\geq 1)$ is ergodic and hence by the ergodic theorem and Assumption~\ref{a:signal} (iii)
$$
\sup_{s\in [0,1]} \frac{1}{T}\sum_{t=1}^T X_t^2(s)\leq  \frac{1}{T}\sum_{t=1}^T \sup_{s\in [0,1]} X_t^2(s)=O_P(1).
$$
Note that by Assumption~\ref{a:noise} the variables $u_{tj}^2/(\gamma^u(0))^2$ are $\chi^2$-distributed with 1 degree of freedom and are independent across $t$. Without loss of generality let us assume that $\gamma^u(0)=1$. Then
\begin{align*}
\Pr\left(\max_{k=1,\ldots, p} \frac{1}{T}\sum_{t=1}^T u_{tk}^2>3\right)&\leq p \Pr\left(e^{\frac{1}{3}\sum_{t=1}^T u_{tk}^2}>e^{T}\right) \leq p E\left(e^{\frac{1}{3}\sum_{t=1}^T u_{t1}^2}\right)e^{-T} \leq p \Big(Ee^{\frac{1}{3}u_{11}^2}\Big)^Te^{-T} \\
&=p\left(e/\sqrt{3}\right)^{-T}\leq p e^{-0.4T}.
\end{align*}
Since we require that $p\to\infty$ at a sub-exponential rate, this tends to zero. Thus we conclude that \eqref{e:bj1} holds.

As for \eqref{e:bj2}, we have
$$\max_{1 \leq j \leq p} \Vert \frac{1}{T} H F^\top U_j \Vert \leq \Vert H \Vert \max_{1 \leq j \leq p} \Vert \frac{1}{T} F^\top U_j \Vert.$$
Using Lemma~\ref{l:Hbound} and Lemma~\ref{l:Bj2}, the claim follows. 
For $B_{3j}$, we note that with Assumption \ref{a:signal} it follows that 
$$\max_{1 \leq j \leq p} \Vert b_j \Vert = \max_{1 \leq j \leq p} \Big( \sum_{\ell=1}^L \lambda_\ell \varphi_\ell^2(s_j)\Big)^{1/2} = \max_{1 \leq j \leq p} \sqrt{EX^2_t(s_j)} = O(1).$$
The claim then follows directly from Lemma~\ref{l:Hbound} and Lemma~\ref{l:FFI}.
This finishes the proof.
\end{proof}

\begin{proof}[\textbf{ \upshape Proof of Lemma~\ref{l:bs}:}]
We just have to note that $\|b_j\|^2=\mathrm{Var}(X_1(s_j))$.
\end{proof}

\begin{proof}[\textbf{ \upshape Proof of Lemma~\ref{l:Hbound}:}]
It holds that 
$$
\|H\|\leq \|(\hat \Lambda/p)^{-1}\| \|\hat F/\sqrt{T}\|\|F/\sqrt{T}\|\|B^\top B/p\|.
$$
Thus, the following easily verifiable assertions imply the proof:
\begin{align}
 \|(\hat \Lambda/p)^{-1}\|&=p/\hat\gamma_L; \qquad \|\hat F/\sqrt{T}\| \leq  \|\hat F/\sqrt{T}\|_F=\sqrt{L};\nonumber\\
 \|F/\sqrt{T}\|^2&\leq \|F/\sqrt{T}\|_F^2=\left(\sum_{\ell=1}^L \frac{1}{T}\sum_{t=1}^T\frac{1}{\lambda_\ell}\langle X_t,\varphi_\ell\rangle^2\right) =\sum_{\ell=1}^L\frac{1}{\lambda_\ell}\bigg\langle\Big(\frac{1}{T}\sum_{t=1}^T X_t\otimes X_t\Big)(\varphi_\ell),\varphi_\ell\bigg\rangle\nonumber\\
 &\leq \frac{L}{\lambda_L}\Big\|\frac{1}{T}\sum_{t=1}^T X_t\otimes X_t\Big\|=O_P(L/\lambda_L);\label{e:H1}\\
\|B^\top B/p\|^2&\leq \|B^\top B/p\|_F^2=\sum_{k=1}^L \sum_{\ell=1}^L \lambda_k\lambda_\ell\left(\frac{1}{p}\sum_{i=1}^p\varphi_k(s_i)\varphi_\ell(s_i)\right)^2 = O(1).\label{e:H2}
\end{align}
In \eqref{e:H1} we used Lemma~\ref{l:XtimesX}, while \eqref{e:H2}  follows from $\sum_{k\geq 1}\lambda_k<\infty$ and Assumption~\ref{a:pcs}.
\end{proof}

\begin{proof}[\textbf{ \upshape Proof of Lemma~\ref{l:H}:}]
Let us first derive a bound for $\|HH^\top-I_L\|$.
We have 
\begin{align}
\|HH^\top-I_L\|&\leq \|HH^\top -\frac{1}{T}HF^\top F H^\top\|+\|\frac{1}{T}HF^\top F H^\top-I_L\|\nonumber\\
&\leq \|H\|^2\|I_L -\frac{1}{T}F^\top F \|_F+\|\frac{1}{T}H F^\top F H^\top-I_L\|.\label{e:HH}
\end{align}
For the first term in \eqref{e:HH} by Lemmas~\ref{l:Hbound} and \ref{l:FFI}

$$\|HH^\top -\frac{1}{T}HF^\top F H^\top\| = O_P\left(\frac{L}{\sqrt{T}} \left(\frac{L}{\lambda_L}\frac{p}{\hat\gamma_L}\right)^2 \right)$$ 
 and the second term is equal to 
\begin{align*}
\|\frac{1}{T}H F^\top F H^\top-\frac{1}{T}\hat F^\top \hat F\|\leq \frac{1}{T}\|H F^\top- \hat F^\top \|_F\|H\|\|F\|_F+\frac{1}{T}\|\hat F\|_F\|\hat F-FH^\top\|_F.
\end{align*}
We have $\|H\|=O_P\Big(pL/\hat \gamma_L \sqrt{\lambda_L}\Big)$ (Lemma~\ref{l:Hbound}) and $\|F\|_F=O_P\Big(\big(T L/\lambda_L\big)^{1/2}\Big)$  (see proof of Lemma~\ref{l:Hbound}) and $\|\hat F\|_F=\sqrt{LT}$. From Lemma~\ref{l:ak12} we deduce that
$$
\frac{1}{T}\|H F^\top- \hat F^\top \|_F= O_P\left(\frac{1}{\sqrt{T}}\Big(L^{2}(\hat\gamma_L/p)^{-2}\big(1/T+1/p\big)\Big)^{1/2}\right).
$$
Combined, this shows that the second term in \eqref{e:HH} is
$$
O_P\left(
\left(\frac{p}{\hat{\gamma}_L}\right)^2\frac{L^{5/2}}{\lambda_L}
\left(
\frac{1}{\sqrt{T}}+\frac{1}{\sqrt{p}}
\right)
\right).
$$
Thus 
\begin{equation}
\|H H^\top-I_L\| = O_P\left(\left(\frac{p}{\hat\gamma_L}\right)^2\frac{L^3}{\lambda_L^2}\left(
\frac{1}{\sqrt{T}}+\frac{1}{\sqrt{p}}
\right)\right).\label{e:HH1}
\end{equation}
Define $D_L:=H H^\top-I_L$ and let us denote by $S=S_T=\{\omega\in\Omega\colon \|D_L\|<1\}$.  If the bound in \eqref{e:HH1} tends to zero, then $\Pr(S)\to 1$. 
On $S$ we have that
$
W_L:=I_L-D_L+D_L^2-D_L^3\pm \ldots
$ converges and it is easily seen that $W_L$ is the inverse of $I_L+D_L=H H^\top$. But then $H$ is invertible and $H^{-1}=H^\top W_L$. 

Hence we get
\begin{equation}\label{e:Hinv}
\Pr(S \cap \{\|H^{-1}\|\leq 2\|H\|\})\geq \Pr(S\cap \|W_L\|\leq 2)\geq \Pr(\|D_L\|\leq 1/2)\to 1.
\end{equation}
Then, using  $H^\top H-I_L=H^{-1}D_LH$, we deduce that on $S$ 
$$
\|H^\top H-I_L\|\leq \|D_L\|\|H^{-1}\|\|H\|.
$$
So the statement of the lemma follows from \eqref{e:HH1}, Lemma~\ref{l:Hbound} and \eqref{e:Hinv}.
\end{proof}

\section{Weakening the assumption on Gaussian noise}
\label{appendix:gauss}

As mentioned in Section~\ref{s:assumptions}, one may weaken the assumed Gaussianity of the error process in Assumption~\ref{a:noise}. We discuss a rough outline of the consequences of avoiding this specific restriction. In the following we instead assume 8 moments for $u_t$ and the validity of some moment inequalities which are well known in the iid case and which have been verified under Assumption~\ref{a:noise}. Additionally, we assume $p=O(T^2)$ in order to guarantee the same convergence rates in our theorems. Previously, subexponential growth of $p=p(T)$ was possible. Below we discuss the necessary modifications. 
\begin{enumerate}[label=(\Roman*)]
\item In Lemma~\ref{l:noisemoments} we prove (i) $E(u_2^\top u_1)^{2k}\leq c_2 p^k$ and (ii) $E\|u_1\|^{4k}\leq c_2 p^{2k}$, $k \in \lbrace 1,2 \rbrace$. In the case of iid noise this follows immediately from Rosenthal's inequality (see \cite{petrov:1975}). To apply this inequality we need for (i) $2k$ moments, and for (ii) $4k$ moments, which ultimately means that we need 8 moments. Extensions to stationary processes exist (see e.g.\ \cite{liu:xiao:wu:2013}). 
In order to provide a more general setting, we now state (i) and (ii) as assumptions.
\item \label{itm:app2} In Lemma~\ref{l:Bj2} we need a bound for
\begin{equation}\label{e:l13}
\sum_{j\leq p \atop i\leq L}\Pr\left( 
\Big|
\frac{1}{T}\sum_{t=1}^T\frac{1}{\sqrt{\lambda_i}}\langle X_t,\varphi_i\rangle u_{tj}\Big|>\kappa
\cap \max_{k\leq L} \frac{1}{T}\sum_{t=1}^T\frac{1}{ \lambda_k}\langle X_t,\varphi_k\rangle^2\leq 2
\right).
\end{equation}
We will use the following version of the Markov inequality:
\begin{lemma}
Suppose that $(A_t)$ and $(V_t)$ are random sequences which are independent of each other. Assume that $(V_t)$ is iid zero mean and has a finite fourth moment.  Then
$$
p_1(\kappa)=\Pr \left(\Big|\frac{1}{T}\sum_{t=1}^T A_tV_t\Big|>\kappa \cap \frac{1}{T}\sum_{t=1}^T A_t^2\leq 2\right)\leq\frac{\mathrm{16 EV_1^4}}{\kappa^4 T^2}.
$$
\end{lemma}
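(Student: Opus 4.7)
Let $S_T:=\sum_{t=1}^T A_tV_t$, so that the event in question is $\{|S_T|>\kappa T\}\cap\{\sum_t A_t^2\le 2T\}$. My plan is to condition on the $\sigma$-algebra $\mathcal{F}_A$ generated by $(A_t)$ and apply a conditional fourth-moment Markov inequality, exploiting that $(V_t)$ is iid, zero mean, and independent of $(A_t)$.

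First I expand
\[
E\bigl[S_T^4\bigm|\mathcal{F}_A\bigr]=\sum_{t_1,t_2,t_3,t_4=1}^T A_{t_1}A_{t_2}A_{t_3}A_{t_4}\,E\bigl[V_{t_1}V_{t_2}V_{t_3}V_{t_4}\bigr].
\]
By independence and the zero-mean property of $V_t$, any index configuration in which some $t_i$ appears exactly once contributes zero. The surviving contributions come from the ``all equal'' tuples (yielding $EV_1^4\sum_t A_t^4$) and from the tuples consisting of two distinct values each appearing twice (yielding $3\sum_{s\ne r}A_s^2A_r^2\,(EV_1^2)^2$, the factor $3$ being the number of pairings of four positions into two pairs). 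Using $(EV_1^2)^2\le EV_1^4$ together with the identity
\[
\sum_t A_t^4+3\sum_{s\ne r}A_s^2A_r^2=3\Bigl(\sum_t A_t^2\Bigr)^2-2\sum_t A_t^4\le 3\Bigl(\sum_t A_t^2\Bigr)^2,
\]
I conclude $E[S_T^4\mid\mathcal{F}_A]\le 3\,EV_1^4\bigl(\sum_t A_t^2\bigr)^2$.

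The conditional Markov inequality applied to the non-negative random variable $S_T^4$ then yields
\[
\Pr\bigl(|S_T|>\kappa T\bigm|\mathcal{F}_A\bigr)\le\frac{E[S_T^4\mid\mathcal{F}_A]}{(\kappa T)^4}\le\frac{3\,EV_1^4\bigl(\sum_t A_t^2\bigr)^2}{\kappa^4T^4}.
\]
Multiplying by the $\mathcal{F}_A$-measurable indicator $\mathbf{1}_{\sum_t A_t^2\le 2T}$ and taking expectation, while bounding the indicator times $\bigl(\sum_t A_t^2\bigr)^2$ deterministically by $(2T)^2=4T^2$, gives
\[
\Pr\bigl(|S_T|>\kappa T,\ \textstyle\sum_t A_t^2\le 2T\bigr)\le\frac{3\,EV_1^4\cdot 4T^2}{\kappa^4T^4}=\frac{12\,EV_1^4}{\kappa^4T^2}\le\frac{16\,EV_1^4}{\kappa^4T^2},
\]
which is the claim, since $|T^{-1}S_T|>\kappa$ is equivalent to $|S_T|>\kappa T$.

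There is no genuine obstacle here. The only care needed is the combinatorial bookkeeping in the fourth-moment expansion, i.e.\ identifying which $4$-tuples of indices survive the mean-zero/independence structure of $(V_t)$. Once that is settled, the bound reduces to a routine conditional Markov inequality and the trivial Jensen estimate $(EV_1^2)^2\le EV_1^4$.
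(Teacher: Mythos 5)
Your proof is correct and follows essentially the same route as the paper: a fourth-moment Markov inequality applied conditionally on the $A_t$'s, the independence/zero-mean expansion of $E[S_T^4\mid\mathcal{F}_A]$ into diagonal and paired terms, the bound $(EV_1^2)^2\le EV_1^4$, and the deterministic bound $\bigl(\sum_t A_t^2\bigr)^2\le 4T^2$ on the restricting event. The only (harmless) difference is that you track the off-diagonal sum exactly and obtain the slightly sharper constant $12$ before relaxing to the stated $16$, whereas the paper bounds $\sum_{s\ne t}A_s^2A_t^2$ by the full double sum and lands on $16$ directly.
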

\begin{proof}[\textbf{\upshape Proof:}]
We have
\begin{align*}
&\Pr\left(\Big|\frac{1}{T}\sum_{t=1}^T A_tV_t\Big|>\kappa \cap \frac{1}{T}\sum_{t=1}^T A_t^2\leq 2\right) \leq \frac{1}{T^4\kappa^4} E\left(\Big|\sum_{t=1}^T A_tV_t\Big|^4I\{\frac{1}{T}\sum_{t=1}^T A_t^2\leq 2\}\right)\\
&\quad = \frac{1}{T^4\kappa^4} E\left(\sum_{r=1}^T \sum_{s=1}^T\sum_{t=1}^T \sum_{u=1}^TA_rA_sA_t A_uV_rV_sV_t V_u I\{\frac{1}{T}\sum_{t=1}^T A_t^2\leq 2\}\right)\\
&\quad = \frac{1}{T^4\kappa^4} E\left(
\sum_{r=1}^T \sum_{s=1}^T\sum_{t=1}^T \sum_{u=1}^T E\left[A_rA_sA_t A_uV_rV_sV_t V_u I\{\frac{1}{T}\sum_{t=1}^T A_t^2\leq 2\}\bigg| A_1,\ldots, A_T\right]\right)\\
&\quad = \frac{1}{T^4\kappa^4} E\left(\sum_{r=1}^T \sum_{s=1}^T\sum_{t=1}^T \sum_{u=1}^T E(V_rV_sV_t V_u)E\left[A_rA_sA_tA_u I\{\frac{1}{T}\sum_{t=1}^T A_t^2\leq 2\}\bigg| A_1,\ldots, A_T\right]\right)\\
&\quad \leq EV_1^4 \frac{1}{T^4\kappa^4} E\left(\sum_{t=1}^T A_t^4 I\{\frac{1}{T}\sum_{t=1}^T A_t^2\leq 2\}\right) +3(EV_1^2)^2\frac{1}{T^4\kappa^4}E\left(\sum_{s=1}^T A_s^2 \sum_{t=1}^T A_t^2I\{\frac{1}{T}\sum_{t=1}^T A_t^2\leq 2\}\right).
\end{align*}
Using $(EV_1^2)^2\leq EV_1^4$ and $\sum_{t=1}^T A_t^4\leq \left(\sum_{t=1}^T A_t^2\right)^2$, the result is immediate.
\end{proof}
The lemma is now applied to \eqref{e:l13} with $\kappa=c (Lp)^{1/4}T^{-1/2}$. Then for any $\epsilon>0$ there is a $c$ independent of $T$, $p$ and $L$ such that $p_1(\kappa)<\epsilon$. Using this in Lemma~\ref{l:Bj2} implies that 
$$
\max_{j\leq p}\left\|\frac{1}{T} F^\top U_j\right\|=O_P\left(\frac{(Lp)^{1/4}}{T^{1/2}}\right).
$$
If again we assume 8 moments for $u_{t}$, the analogue approach yields
$$
\max_{j\leq p}\left\|\frac{1}{T} F^\top U_j\right\|=O_P\left(\frac{(Lp)^{1/8}}{T^{1/2}}\right).
$$
\item 
The bound in \ref{itm:app2}\ is used in the proof of Lemma~\ref{l:bj}. In \eqref{e:bj2} we obtain the modification
\begin{equation}\label{e:B2}
\frac{\hat\gamma_L}{p}\max_{j=1}^p\|B_{2j}\|=O_P\left(\frac{L^{5/8}p^{1/8}}{\sqrt{\lambda_L}\sqrt{T}}\right).
\end{equation}
This implies that the bound for $R^{(2)}$ needs to be modified. In the proof of Theorem~\ref{thm} (where $L$ is fixed) this needs to be replaced by
$$
O_P\left(1/T^{1/4}+T^{1/4}/\sqrt{p}+p^{1/8}/\sqrt{T}\right).
$$
Since we require $p=O(T^2)$ the rate remains unchanged. Moreover, the factor $L$ in \eqref{e:B2} is included with a smaller power than in \eqref{e:bj2}, and hence this modification also does not affect the rate in Theorem~\ref{thm2}, where $L$ is allowed to grow.
\item In the proof of Lemma~\ref{l:ak11} we need 
$$
E\left(\sum_{j=1}^p u_{1j}\right)^4=O(p^2).
$$
In the iid case this can again be deduced by the Rosenthal's inequality. Likewise we may use some extension thereof in the dependent case.
\item In the proof of Lemma~\ref{l:bj} we  bound
\begin{align*}
\Pr\left(\max_{k=1,\ldots, p}\frac{1}{T} \sum_{t=1}^T u_{tk}^2>2Eu_{11}^2\right)&\leq
p \Pr\left(\sum_{t=1}^T (u_{t1}^2-Eu_{11}^2)>Eu_{11}^2 T\right)=O\left(\frac{p}{T^2}\right),
\end{align*}
by the Rosenthal inequality. Note that with respect to $t$ the $u_{t}$ are assumed to be independent. For our proof this term is required to be $O(1)$, which is the case due to the assumption of $p=O(T^2)$.
\end{enumerate}

\section*{Acknowledgements} We would like to thank the editors and the anonymous reviewers for their interesting questions and very constructive remarks which helped improve the paper significantly.

\bibliographystyle{abbrv}
\bibliography{paper}

\begin{thebibliography}{33}
\providecommand{\natexlab}[1]{#1}
\providecommand{\url}[1]{\texttt{#1}}
\expandafter\ifx\csname urlstyle\endcsname\relax
  \providecommand{\doi}[1]{doi: #1}\else
  \providecommand{\doi}{doi: \begingroup \urlstyle{rm}\Url}\fi

\bibitem[Aneiros et~al.(2019)Aneiros, Cao, Fraiman, Genest, and
  Vieu]{aneirosetal:2019}
G.~Aneiros, R.~Cao, R.~Fraiman, C.~Genest, and P.~Vieu.
\newblock Recent advances in functional data analysis and high-dimensional
  statistics.
\newblock \emph{Journal of Multivariate Analysis}, 170:\penalty0 3--9, 2019.
\newblock Special Issue on Functional Data Analysis and Related Topics.

\bibitem[Bai(2003)]{bai2003}
J.~Bai.
\newblock Inferential theory for factor models of large dimensions.
\newblock \emph{Econometrica}, 71:\penalty0 135--171, 2003.

\bibitem[Bai and Li(2012)]{baili2012}
J.~Bai and K.~Li.
\newblock Statistical analysis of factor models of high dimension.
\newblock \emph{The Annals of Statistics}, 40:\penalty0 436--465, 2012.
\newblock \doi{10.1214/11-AOS966}.

\bibitem[Bai and Li(2016)]{baili2016}
J.~Bai and K.~Li.
\newblock Maximum likelihood estimation and inference for approximate factor
  models of high dimension.
\newblock \emph{The Review of Economics and Statistics}, 98:\penalty0 298--309,
  2016.

\bibitem[Bai and Liao(2016)]{bailiao2012}
J.~Bai and Y.~Liao.
\newblock Efficient estimation of approximate factor models via regularized
  maximum likelihood.
\newblock \emph{Journal of Econometrics}, 191:\penalty0 1--18, 2016.

\bibitem[Bai and Ng(2002)]{baing2002}
J.~Bai and S.~Ng.
\newblock Determining the number of factors in approximate factor models.
\newblock \emph{Econometrica}, 70:\penalty0 191--221, 2002.

\bibitem[Bai and Ng(2019)]{baing:2019}
J.~Bai and S.~Ng.
\newblock Rank regularized estimation of approximate factor models.
\newblock \emph{Journal of Econometrics}, 212\penalty0 (1):\penalty0 78 -- 96,
  2019.

\bibitem[Bosq(2000)]{bosq:2000}
D.~Bosq.
\newblock \emph{{Linear processes in function spaces: theory and
  applications}}.
\newblock Lecture Notes in Statistics. Springer, New York, 2000.

\bibitem[Chamberlain and Rothschild(1983)]{chamberlain1983}
G.~Chamberlain and M.~Rothschild.
\newblock Arbitrage, factor structure, and mean-variance analysis on large
  asset markets.
\newblock \emph{Econometrica}, 51:\penalty0 1281--1304, 1983.

\bibitem[Choi(2012)]{choi2012}
I.~Choi.
\newblock Efficient estimation of factor models.
\newblock \emph{Econometric Theory}, 28:\penalty0 274--308, 2012.

\bibitem[Cuevas(2014)]{cuevas:2014}
A.~Cuevas.
\newblock A partial overview of the theory of statistics with functional data.
\newblock \emph{Journal of Statistical Planning and Inference}, 147:\penalty0
  1--23, 2014.

\bibitem[Fan(1993)]{fan:1993}
J.~Fan.
\newblock Local linear regression smoothers and their minimax efficiencies.
\newblock \emph{Ann. Statist.}, 21\penalty0 (1):\penalty0 196--216, 03 1993.

\bibitem[Fan et~al.(2013)Fan, Liao, and Mincheva]{fanetal2013}
J.~Fan, Y.~Liao, and M.~Mincheva.
\newblock Large covariance estimation by thresholding principal orthogonal
  complements.
\newblock \emph{Journal of the Royal Statistical Society. Series B.}, 75, 09
  2013.

\bibitem[Ferraty and Vieu(2006)]{ferratyvieu:2006}
F.~Ferraty and P.~Vieu.
\newblock \emph{Nonparametric Functional Data Analysis: Theory and Practice
  (Springer Series in Statistics)}.
\newblock Springer-Verlag, Berlin, Heidelberg, 2006.

\bibitem[Goia and Vieu(2016)]{goiavieu:2016}
A.~Goia and P.~Vieu.
\newblock An introduction to recent advances in high/infinite dimensional
  statistics.
\newblock \emph{Journal of Multivariate Analysis}, 146:\penalty0 1--6, 2016.
\newblock Special Issue on Statistical Models and Methods for High or Infinite
  Dimensional Spaces.

\bibitem[Hall and Horowitz(2007)]{hallhorowitz:2007}
P.~Hall and J.~Horowitz.
\newblock Methodology and convergence rates for functional linear regression.
\newblock \emph{Ann. Statist.}, 35\penalty0 (1):\penalty0 70--91, 02 2007.

\bibitem[H{\"o}rmann and Cerovecki(2017)]{hormann:cerovecki:2017}
S.~H{\"o}rmann and C.~Cerovecki.
\newblock On the clt for discrete fourier transforms of functional time series.
\newblock \emph{Journal of Multivariate Analysis}, 154:\penalty0 282--295,
  2017.

\bibitem[H\"ormann and Jammoul(2021)]{hormann:jammoul:2020a}
S.~H\"ormann and F.~Jammoul.
\newblock Preprocessing noisy functional data: a multivariate perspective.
\newblock \emph{\url{https://arxiv.org/abs/2012.05824}}, 2021.

\bibitem[H{\"o}rmann and Kokoszka(2010)]{hoermann2010}
S.~H{\"o}rmann and P.~Kokoszka.
\newblock Weakly dependent functional data.
\newblock \emph{The Annals of Statistics}, 38:\penalty0 1845--1884, 2010.

\bibitem[Horv\'ath and Kokoszka(2012)]{horvath:kokoszka:2012}
L.~Horv\'ath and P.~Kokoszka.
\newblock \emph{Inference for Functional Data with Applications}.
\newblock Springer, 2012.

\bibitem[Hsing and Eubank(2015)]{hsing:eubank:2015}
T.~Hsing and R.~Eubank.
\newblock \emph{Theoretical Foundations of Functional Data Analysis, with an
  Introduction to Linear Operators}.
\newblock Wiley, 2015.

\bibitem[Johnstone(2001)]{johnstone:2001}
I.~M. Johnstone.
\newblock On the distribution of the largest eigenvalue in principal components
  analysis.
\newblock \emph{The Annals of Statistics}, 29:\penalty0 295--327, 2001.

\bibitem[Karoui(2005)]{karoui:2005}
N.~Karoui.
\newblock Recent results about the largest eigenvalue of random covariance
  matrices and statistical application.
\newblock \emph{Acta Phys. Polon. B}, 36:\penalty0 2681--2697, 2005.

\bibitem[Liu et~al.(2013)Liu, Xiao, and Wu]{liu:xiao:wu:2013}
W.~Liu, H.~Xiao, and W.B. Wu.
\newblock Probability and moment inequalities under dependence.
\newblock \emph{Statistica Sinica}, 23:\penalty0 1257--1272, 2013.

\bibitem[Mardia et~al.(1979)Mardia, Kent, and Bibby]{mardia:kent:bibby:1979}
K.V. Mardia, J.T. Kent, and J.M Bibby.
\newblock \emph{Multivariate Analysis}.
\newblock Academic Press, London, 1979.

\bibitem[M\"uller et~al.(2006)M\"uller, Stadtm\"uller, and
  Yao]{Mueller:Stadtmueller:Yao:2006}
H.-G. M\"uller, U.~Stadtm\"uller, and F.~Yao.
\newblock Functional variance processes.
\newblock \emph{Journal of the American Statistical Association}, 101:\penalty0
  1007--1018, 2006.

\bibitem[Onatski(2010)]{onatski2010}
A~Onatski.
\newblock Determining the number of factors from empirical distribution of
  eigenvalues.
\newblock \emph{The Review of Economics and Statistics}, 92:\penalty0
  1004--1016, 2010.

\bibitem[Owen and Wang(2016)]{owenwang2016}
A.~Owen and J.~Wang.
\newblock Bi-cross-validation for factor analysis.
\newblock \emph{Statistical Science}, 31:\penalty0 119--139, 2016.

\bibitem[Petrov(1975)]{petrov:1975}
V.V Petrov.
\newblock {Sums of Independent Random Variables}.
\newblock 1975.

\bibitem[Petrov(1995)]{petrov}
V.V. Petrov.
\newblock \emph{Limit Theorems of Probability Theory}.
\newblock Oxford Science Publications, New York, 1995.

\bibitem[Ramsay and Silverman(2005)]{ramsaysilverman05}
J.O. Ramsay and B.W. Silverman.
\newblock \emph{Functional {D}ata {A}nalysis}.
\newblock Springer, New York, 2005.

\bibitem[Staniswalis and Lee(1998)]{Staniswalis:Lee:1998}
J.~Staniswalis and J.~Lee.
\newblock Nonparametric regression analysis of longitudinal data.
\newblock \emph{Journal of the American Statistical Association}, 93:\penalty0
  1403--1418, 1998.

\bibitem[Wand and Jones(1995)]{wandjones95}
M.P. Wand and M.~C. Jones.
\newblock \emph{Kernel Smoothing}.
\newblock Chapman \& Hall, London, 1995.

\end{thebibliography}

\end{document}